\newtheorem{theoremalph}{Theorem}
\newtheorem*{Main Theorem}{Main Theorem}
\newtheorem{Theorem}{Theorem}[section]
\newtheorem*{Theorem A}{Theorem A}
\newtheorem*{Theorem A'}{Theorem A'}
\newtheorem*{Theorem B'}{Theorem B'}
\newtheorem*{Theorem C'}{Theorem C'}
\newtheorem*{Theorem D'}{Theorem D'}
\newtheorem{Conj}{Conjecture}
\newtheorem{Proposition}[Theorem]{Proposition}
\newtheorem{Lemma}[Theorem]{Lemma}
\newtheorem{Remark}{Remark}
\newtheorem{Remark-numbered}{Remark}
\newtheorem{Corollary}[Theorem]{Corollary}
\newtheorem*{Claim}{Claim}
\newtheorem{Claim-numbered}{Claim}
\newtheorem*{Acknowledgements}{Acknowledgements}
 \def\NN{{\mathbb N}} 
 \def\RR{{\mathbb R}}
 \def\ZZ{{\mathbb Z}}
\def\La{\Lambda}
  \def\cG{{\cal G}} \def\cM{{\cal M}} \def\cS{{\cal S}}
  \def\cH{{\cal H}}  \def\cT{{\cal T}}
\def\cC{{\cal C}}    \def\cU{{\cal U}}
   \def\cP{{\cal P}} 
\def\cF{{\cal F}}   \def\cR{{\cal R}} \def\cX{{\cal X}}
\newcommand{\sing}{{\operatorname{Sing}}}
\newcommand{\diff}{{\operatorname{Diff}}}
\def\diff{\operatorname{Diff}}
\def\Int{\operatorname{Int}}
\def\ud{\operatorname{d}}
\def\e{{\varepsilon}}
\def\det{\operatorname{det}}
\def\wh{\widehat}
\begin{document}

\title{{Ergodic optimization for some dynamical systems beyond uniform hyperbolicity}}

\author{Dawei Yang and  Jinhua Zhang\footnote{  
D.Y  was partially supported by NSFC 11822109, 11671288, 11790274. J.Z was partially supported by starting grant from Beihang University.}}


\maketitle

\begin{abstract} 
In this paper, we show that  for several interesting systems beyond uniform hyperbolicity, any generic continuous function has a unique maximizing measure with zero entropy. In some cases, we also know that the maximizing measure has   full support. These interesting systems including singular hyperbolic attractors, $C^\infty$ surface diffeomorphisms and diffeomorphisms away from homoclinic tangencies. 

\hspace{-1cm}\mbox
\smallskip

\noindent{\bf Mathematics Subject Classification (2010).} 37A25, 37A35, 37C05, 37D25.
\\
{\bf Keywords.}  Maximizing measure, entropy, singular hyperbolicity, homoclinic tangency.
\end{abstract}

\section{Introduction}
 
In this work, we will study the ergodic optimization problem for some vector fields and diffeomorphisms beyond uniform hyperbolicity. We can show that for singular hyperbolic attractors, $C^\infty$ surface diffeomorphisms and $C^1$ generic diffeomorphisms away from homoclinic tangencies, a generic continuous function has a unique maximizing measure and the maximizing measure has zero entropy.  In some cases, we can also give the information of the support of maximizing measure: the support is generally large.

This kind of results has been shown by Bousch \cite{Bou} and Br\'emont \cite{Br} for maps with some expansive property and specification property, which are satisfied by uniformly hyperbolic maps. We will give more details about the results of  Bousch and Br\'emont later.


\subsection{The abstract version}

 Let $(\phi_t)_{t\in\RR}$ be a continuous flow on a compact metric space $K$. Let us denote $C^0(K)$ the set of continuous functions on $K$, denote $\cM_{inv}(\phi_t)$ the set of invariant measures of  the flow $(\phi_t)_{t\in\RR}$ and denote $\cM_{erg}(\phi_t)$ the set of ergodic measures of the flow $(\phi_t)_{t\in\RR}$. Given   $\varphi\in C^0(K)$, a probability measure $\mu$ is a \emph{maximizing measure of $\varphi$}
 if it is invariant under $(\phi_t)_{t\in\RR}$ and 
 $$\beta(\varphi):=\sup_{\nu\in\cM_{inv}(\phi_t)}\int\varphi\ud\nu=\int\varphi\ud\mu.$$
Let us denote by $\cM_{max}(\varphi)$ the set of maximizing measures of 
$\varphi$. By definition, one can check that $\cM_{max}(\varphi)$ is   compact and convex, and if $\mu$ belongs to $\cM_{max}(\varphi)$, so do its ergodic components. Similar definitions can be made for diffeomorphisms in a parallel way. Hence they are omitted.

\subsection{Singular hyperbolic attractors}
The dynamics of vector fields differ from the one of diffeomorphisms and the presence of singularities is one of the reasons. As a prototype model,  
 Lorenz attractor \cite{Lo} has been substantially studied, see for instance~\cite{Gu, GuWi, Wi}. The singularities and periodic orbits in Lorenz attractor are hyperbolic. To make the hyperbolic structures of singularities and  periodic orbits compatible,  \cite{MPP} introduced the notion of ``singular hyperbolicity''.  

Let $M$ be a compact Riemannian manifold without boundary and let $\cX^1(M)$ be the space of $C^1$ vector fields endowed with $C^1$ topology. A compact invariant set $\La$ of a $C^1$-vector field $X$  is \emph{singular hyperbolic}, if there exist a  $(D\phi_t)_{t\in\RR}$-invariant splitting $T_\La M=E^{ss}\oplus E^{cu}$, where $(\phi_t)_{t\in\RR}$ is the flow generated by the vector field $X$,   and two numbers $C>1$ and $\lambda>0$ such that 
\begin{itemize} 
	\item \textbf{Contraction along $E^{ss}$:} $\|D\phi_t|_{E^{ss}(x)}\|\leq C\cdot e^{-\lambda t}$ for $x\in \La$ and $t\geq0$;
	\item \textbf{Domination:}	$\|D\phi_t|_{E^{ss}(x)}\|\cdot\|D\phi_{-t}|_{E^{cu}(\phi_t(x))}\|\leq C\cdot e^{-\lambda t}$ for $x\in \La$ and $t\geq0$;
	\item \textbf{Sectional expanding along $E^{cu}$:} for any $x\in\La$ and any 2-dimensional linear subspace $P\subset E^{cu}(x)$, one has 
	$$\det(D\phi_t|_P)\geq C^{-1}\cdot e^{\lambda t}~\textrm{ for   $t\geq0$}.$$
\end{itemize}
A compact invariant set $\La$ of a flow $(\phi_t)_{t\in\RR}$ is an attractor, if  
\begin{itemize}
	\item there exists a neighborhood $U$ of $\La$ such that $\phi_t(\overline{U})\subset \Int U$ for $t\geq 1$ and $\cap_{t\geq 0}\phi_t(U)=\Lambda$;
	\item $\La$ is a transitive set, i.e. $\La$ admits a dense orbit.
\end{itemize}
The Lorenz attractor has been shown to be singular hyperbolic, see for instance~\cite{MPP}.
It has been shown in~\cite[Corollary 1]{MSV} that for $C^1$-generic Lorenz attractor on 3-manifolds, for generic continuous function, the ergodic maximizing measure is unique with zero entropy and has full support on the Lorenz attractor. 
 In fact, one can show that such result holds in any dimension and the maximizing measure has zero entropy.  
 
 Recall that a property   is said to be $C^1$ generic for  vector fields if there is a dense $G_\delta$ set\footnote{A dense $G_\delta$ set is also said to be residual.} in $\cX^1(M)$ such that any vector field in this dense $G_\delta$ set has the property.

\begin{theoremalph}\label{thm.singular-hyperbolic}
Let $M$ be a $d$-dimensional manifold for $d\ge 3$. Let $X$ be a $C^1$ vector field on $M$ and $\Lambda$ be a singular hyperbolic attractor of $X$. Then there is a residual subset $\cR_\Lambda\subset C^0(\Lambda)$ such that any $\varphi\in \cR_\La$ has a unique maximizing measure  and the maximizing measure    has zero entropy. 
	
	Moreover, if $X$ is contained in some dense $G_\delta$ subset in $\cX^1(M)$, then the support of the maximizing measure of $\varphi\in\cR_\Lambda$ is $\Lambda$.
\end{theoremalph}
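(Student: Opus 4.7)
The plan is to reduce Theorem~A to an abstract ergodic optimization scheme in the spirit of Bousch and Br\'emont, feeding in two dynamical inputs that singular hyperbolicity provides. The first input is upper semicontinuity of the metric-entropy map $\mu \mapsto h_\mu(\phi_1)$ on $\cM_{inv}(\phi_t|_\La)$, which for singular hyperbolic attractors is a standard consequence of the dominated $E^{ss}\oplus E^{cu}$ splitting and I will quote it from the literature. The second input, which is the technical core, is weak-$*$ density of zero-entropy ergodic measures inside $\cM_{inv}(\phi_t|_\La)$. I intend to produce this by combining (a) the Dirac measures on the finitely many (necessarily hyperbolic) singularities contained in $\La$ with (b) hyperbolic periodic measures obtained by closing long recurrent orbit segments that avoid a fixed neighborhood of $\Sing(X) \cap \La$, using the hyperbolicity of the flow on the non-singular part of $\La$; orbits that spend a definite fraction of time near singularities are approximated by convex combinations of the singularity Diracs with such periodic measures, then projected onto their ergodic components.

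Given these two inputs, I build the residual set as $\cR_\La := \cU \cap \bigcap_n V_n$, where
\[
\cU = \bigl\{\varphi : \cM_{max}(\varphi) \text{ is a singleton}\bigr\}, \qquad V_n = \Bigl\{\varphi :\ \sup_{\mu \in \cM_{max}(\varphi)} h_\mu(\phi_1) < 1/n \Bigr\}.
\]
The set $\cU$ is a classical dense $G_\delta$: the set-valued map $\varphi \mapsto \cM_{max}(\varphi)$ is upper semicontinuous with non-empty compact convex values, so $\cU = \bigcap_n \{\varphi : \diam \cM_{max}(\varphi) < 1/n\}$ is a countable intersection of open sets, and density follows from a standard tie-breaking perturbation separating extreme points of $\cM_{max}(\varphi)$. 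For $V_n$, upper semicontinuity of both $\cM_{max}(\cdot)$ and $h_\mu(\cdot)$ makes $\varphi \mapsto \sup_{\mu \in \cM_{max}(\varphi)} h_\mu$ upper semicontinuous, so each $V_n$ is open; density of $V_n$ uses the second input: given $\varphi$, take an ergodic $\mu \in \cM_{max}(\varphi)$, approximate $\mu$ weak-$*$ by an ergodic zero-entropy $\mu_0$, and perturb $\varphi$ by a small continuous function $\e\psi$ chosen so that $\mu_0$ is the unique maximizer of $\e\psi$ among invariant measures; a standard estimate then forces $\cM_{max}(\varphi + \e\psi)$ to be concentrated near $\mu_0$, hence of arbitrarily small entropy by upper semicontinuity. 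Combining with $\cU$ identifies, for $\varphi \in \cR_\La$, the unique maximizer and shows it has zero entropy.

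For the full-support addendum, I use that for $C^1$-generic $X$, Pugh's closing lemma together with the transitivity built into the definition of attractor produces a countable family of periodic orbits whose union is dense in $\La$. For each open set $U_k$ in a countable base of $\La$, set
\[
W_k = \bigl\{\varphi :\ \mu(U_k) > 0 \text{ for every } \mu \in \cM_{max}(\varphi)\bigr\}.
\]
A Portmanteau-type argument combined with upper semicontinuity of $\cM_{max}$ shows $W_k$ is open; density is obtained by perturbing $\varphi$ with a small non-negative bump supported on a neighborhood of a periodic orbit meeting $U_k$, forcing any maximizing measure to put positive mass there. Intersecting $\cR_\La$ with $\bigcap_k W_k$ yields full support.

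The main obstacle I anticipate is input (b) above, namely the weak-$*$ density of zero-entropy ergodic measures inside $\cM_{inv}(\phi_t|_\La)$ for an \emph{arbitrary} singular hyperbolic attractor, without any $C^1$ genericity on $X$. The classical Anosov closing lemma does not directly apply near the singularities, which sit inside $\La$, and orbits that linger near $\Sing(X) \cap \La$ need separate treatment. My plan is to localize the closing argument to compact pieces away from a fixed neighborhood of $\Sing(X) \cap \La$, using the sectional expansion of $E^{cu}$ and the uniform contraction of $E^{ss}$ there to produce hyperbolic periodic shadows of long non-singular segments, while approximating the time spent near singularities directly by Dirac measures at those hyperbolic singularities; the delicate point is the bookkeeping that guarantees the approximant is itself ergodic and has zero entropy.
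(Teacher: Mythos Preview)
Your overall architecture---reduce to an abstract scheme requiring upper semicontinuity of entropy plus a density statement for zero-entropy ergodic measures---matches the paper's Theorem~\ref{thm.abstract-result}. However, your perturbation step for the density of $V_n$ (and likewise of $W_k$) does not work as written. From the fact that $\mu_0$ is the unique maximizer of $\psi$ it does \emph{not} follow that $\cM_{max}(\varphi+\e\psi)$ is concentrated near $\mu_0$: if $\nu \in \cM_{max}(\varphi+\e\psi)$, then combining $\int(\varphi+\e\psi)\,\ud\nu \ge \int(\varphi+\e\psi)\,\ud\mu_0$ with $\int\varphi\,\ud\nu \le \beta(\varphi)$ yields only $\int\psi\,\ud\nu \ge \int\psi\,\ud\mu_0 - \e^{-1}\big(\beta(\varphi)-\int\varphi\,\ud\mu_0\big)$, and for small $\e$ this lower bound is vacuous, so $\nu$ need not be close to $\mu_0$ at all. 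The correct mechanism is Br\'emont's perturbation lemma (the paper's Lemma~\ref{l.perturbation-lemma}) combined with Morris's decomposition (Lemma~\ref{l.decomposition}): since $\mu_0$ is weak-$*$ close to a maximizer of $\varphi$, it \emph{nearly} maximizes $\varphi$; Br\'emont then produces $\hat\varphi$ $C^0$-close to $\varphi$ and $\hat\mu \in \cM_{max}(\hat\varphi)$ close to $\mu_0$; Morris forces $\mu_0$ to occur as an ergodic component of $\hat\mu$, hence $\mu_0 \in \cM_{max}(\hat\varphi)$; only \emph{then} may one add a small multiple of $\psi$ to make $\mu_0$ the unique maximizer. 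The paper encapsulates this as Lemma~\ref{l.dense-function-with-unique-maximizing-measure} and then bypasses your direct $V_n$, $W_k$ constructions entirely by passing to continuity points of the entropy (resp.\ support) map on $\overline{\cM_{erg}}$ and pulling those back via Lemma~\ref{l.open-dense-measure-give-open-dense-function}.

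Separately, what you identify as your ``main obstacle''---input~(b)---is in fact not an obstacle, and your proposed localization and bookkeeping near the singularities is unnecessary. An ergodic measure with positive metric entropy cannot charge $\Sing(X)$ (the Dirac at a singularity has zero entropy), so on a singular hyperbolic attractor it is automatically a \emph{regular} hyperbolic measure; Liao's shadowing lemma, as in \cite{L85,GY,SGW}, then yields periodic approximation directly, with no need to split orbits into near-singular and far-from-singular pieces. This is precisely the property $\cP_e^+$ isolated in the paper, and it already suffices: a zero-entropy ergodic measure approximates itself, while a positive-entropy one is approximated by periodic measures. For the full-support addendum the paper does not argue via bumps near periodic orbits (which fails for the same reason as above); instead, for $C^1$-generic $X$ it establishes (Theorem~\ref{Thm:generic-measure}) that periodic measures are dense in $\cM_{inv}(\La)$ and that a residual set of invariant measures has support equal to $\La$, after which the abstract continuity-point argument applies verbatim to the map $\mu\mapsto\supp(\mu)$.
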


For flows with hyperbolicity, there are other works \cite{HLMXZ,MSV} which consider generic ergodic opmizations in higher regularities.

\subsection{$C^\infty$ surface diffeomorphisms}
Surfaces are lowest dimensional manifolds on which diffeomorphisms have complexity. There are many works on surface diffeomorphisms. See \cite{ABCD,B,Bu,CP, GG, F, HHTU, S} for a partial list. We also obtain some result on surface diffeomorphisms.

\begin{theoremalph}\label{Thm:surface-diff}
Assume that $f$ is a $C^\infty$   diffeomorphism on a surface $M$. Then there exists a residual subset  $\cR$ of $C^0(M)$ such that for each $\varphi\in \cR$, the maximizing measure of $\varphi$ is unique and has zero entropy.

\end{theoremalph}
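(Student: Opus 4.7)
The strategy is to exhibit $\cR$ as the intersection
$$\cR = \cU \cap \bigcap_{n \geq 1} \cW_n,$$
where $\cU := \{\varphi \in C^0(M) : |\cM_{max}(\varphi)| = 1\}$ and
$$\cW_n := \{\varphi \in C^0(M) : \cM_{max}(\varphi) \subset \{\mu \in \cM_{inv}(f) : h_\mu(f) < 1/n\}\}.$$
On $\cR$, the unique maximizing measure has entropy strictly less than $1/n$ for every $n$, hence zero entropy. I plan to show that $\cU$ is residual and that each $\cW_n$ is open and dense in $C^0(M)$.

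Residuality of $\cU$ is classical in ergodic optimization: the set-valued map $\varphi \mapsto \cM_{max}(\varphi)$ has closed graph with compact convex values in the compact metrizable space $\cM_{inv}(f)$ (under the weak-$*$ topology), and a standard Baire category argument then produces a dense $G_\delta$ subset on which this map is single-valued. For openness of $\cW_n$, the central input is Newhouse's theorem: for $C^\infty$ diffeomorphisms on a compact manifold, the metric entropy map $\mu \mapsto h_\mu(f)$ is upper semi-continuous on $\cM_{inv}(f)$. Hence $\{\mu : h_\mu(f) < 1/n\}$ is open. If $\cM_{max}(\varphi)$ is contained in this open set, compactness yields an open neighborhood $N$ with $\cM_{max}(\varphi) \subset N \subset \{h_\mu < 1/n\}$, and the upper semi-continuity of $\cM_{max}(\cdot)$ gives $\cM_{max}(\psi) \subset N$ for every $\psi$ in a suitable $C^0$-neighborhood of $\varphi$.

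The main task is density of $\cW_n$. Given $\varphi \in C^0(M)$ and $\delta > 0$, I would locate an invariant probability measure $\mu_0$ whose support $K := \supp \mu_0$ has zero topological entropy (for instance a periodic orbit) and such that $\int \varphi \, d\mu_0 > \beta(\varphi) - \eta$ for $\eta > 0$ arbitrarily small. Setting $\alpha := \delta/(2\diam M)$ and $\psi := \varphi - \alpha\, d(\cdot, K)$, one has $\|\psi - \varphi\|_\infty < \delta$, and every $\nu \in \cM_{max}(\psi)$ satisfies
$$\alpha \int d(\cdot,K)\, d\nu \;\leq\; \int \varphi\, d\nu - \int \varphi\, d\mu_0 \;\leq\; \eta,$$
using $\int \psi\, d\nu \geq \int \psi\, d\mu_0 = \int \varphi\, d\mu_0$. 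As $\eta/\alpha \to 0$, every weak-$*$ limit point of such $\nu$ is supported on $K$ by continuity of $x \mapsto d(x,K)$; since $h_{\mathrm{top}}(K) = 0$ forces every invariant measure supported on $K$ to have zero entropy, Newhouse's upper semi-continuity of entropy provides an open neighborhood $N$ of $\{\nu : \supp \nu \subset K\}$ contained in $\{\mu : h_\mu(f) < 1/n\}$, and for $\eta$ small enough $\cM_{max}(\psi) \subset N$, so $\psi \in \cW_n$.

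The hardest step will be producing the approximating measure $\mu_0$. For $C^\infty$ surface diffeomorphisms, periodic measures are not a priori weak-$*$ dense in $\cM_{inv}(f)$, so one cannot directly approximate an arbitrary maximizer by a periodic orbit. The plan is to reduce by ergodic decomposition---every ergodic component of a maximizer is itself a maximizer, since $\mu \mapsto \int \varphi\, d\mu$ is affine---to the case of an ergodic maximizer $\mu^*$. If $h_{\mu^*} = 0$, I would attempt to take $\mu_0 = \mu^*$, possibly restricted to a minimal subset of its support in order to achieve zero topological entropy of the support. If $h_{\mu^*} > 0$, Ruelle's inequality on a surface forces $\mu^*$ to have a positive top Lyapunov exponent, and Katok's periodic approximation theorem (in $C^{1+\alpha}$ regularity) yields periodic orbits approximating $\mu^*$ in the weak-$*$ topology whenever $\mu^*$ is hyperbolic, producing the desired $\mu_0$. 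The genuinely non-hyperbolic positive-entropy case is the delicate point and relies on the recent structural theory of $C^\infty$ surface diffeomorphisms (Sarig's countable Markov partitions and their developments) to establish the required periodic-orbit approximation.
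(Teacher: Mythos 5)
Your skeleton parallels the paper's in its inputs (Yomdin--Newhouse upper semi-continuity of entropy for $C^\infty$ maps, Ruelle's inequality to get hyperbolicity from positive entropy on a surface, Katok's periodic approximation), but the route to density of $\cW_n$ is different: the paper goes through Br\'emont's perturbation lemma together with lemmas of Morris and Jenkinson, and targets the residual set of continuity points of $\mu\mapsto h_\mu$ on $\overline{\cM_{erg}(f)}$, whereas you perturb directly by $\psi=\varphi-\alpha\,d(\cdot,K)$. That direct perturbation is a legitimate alternative, and the openness argument is fine, but the density step as you wrote it has a genuine gap in the zero-entropy case. If the ergodic maximizer $\mu^*$ has $h_{\mu^*}=0$ and you take $\mu_0=\mu^*$, there is no reason that $K=\supp\mu^*$ has zero \emph{topological} entropy: zero metric entropy of $\mu^*$ says nothing about other invariant measures carried by $K$. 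Passing to a minimal subset does not repair this: minimal sets can have positive topological entropy, and moreover an ergodic $\mu^*$ whose support is not minimal assigns measure zero to every proper minimal subset of $\supp\mu^*$, so ``restricting'' $\mu^*$ to such a set is vacuous. Invoking Sarig's symbolic dynamics does not help either; those Markov partitions encode the positive-entropy part of the system and do not produce periodic orbits approximating zero-entropy ergodic measures.

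The good news is that the problematic case is unnecessary, and once you see this the rest of your argument closes. For density of $\cW_n$ you may split into two cases. If every ergodic maximizer of $\varphi$ has zero entropy, then since ergodic components of a maximizer are again maximizers (Item~\ref{i.ergodic-component} of Proposition~\ref{Pro:easy-property}) and entropy is the integral of the entropies of the ergodic components, every $\nu\in\cM_{max}(\varphi)$ has $h_\nu=0$; hence $\varphi\in\cW_n$ already and no perturbation is needed. Otherwise some ergodic maximizer $\mu^*$ has $h_{\mu^*}>0$, Ruelle's inequality on the surface forces $\mu^*$ to be hyperbolic, and Katok's theorem (valid here since $C^\infty\subset C^{1+\alpha}$) yields periodic orbits $\gamma$ with $\delta_\gamma\to\mu^*$ weak-$*$; taking $\mu_0=\delta_\gamma$ and $K=\gamma$ gives $h_{\mathrm{top}}(K)=0$ and $\eta=\beta(\varphi)-\int\varphi\,d\delta_\gamma$ arbitrarily small, after which your $d(\cdot,K)$-perturbation argument runs as written. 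With that dichotomy in place your proof is correct, and it is a somewhat more hands-on alternative to the paper's Lemma~\ref{l.dense-function-with-unique-maximizing-measure} and Lemma~\ref{l.open-dense-measure-give-open-dense-function}.
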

Since the dynamics of surface diffeomorphisms may be decomposed into several pieces, we do not have the information on the support of ergodic maximizing measures.

\subsection{Diffeomorphisms away from homoclinic tangencies}

A diffeomorphism $f$ is said to have a \emph{homoclinic tangency} if $f$ has a hyperbolic periodic orbit $\gamma$ such that $W^s(\gamma)$ and $W^u(\gamma)$ intersect at some point non-transversely.    Palis \cite{Pa} conjectured that homoclinic tangencies is one of the obstructions to hyperbolicity, hence it is important to understand the dynamics far away from homoclinic tangencies, and many interesting results are obtained. One can see the introduction of \cite{CSY} to know the results in this direction. 

Recall that a compact  $f$-invariant set $\La$ is \emph{isolated} if there exists an open neighborhood $U$ of $\Lambda$ such that $\cap_{n\in\ZZ}f^n(U)=\La.$ We obtain the following  ergodic optimization results for diffeomorphisms away from homoclinic tangencies.

\begin{theoremalph}~\label{thm.homoclinic-class}
There exists a dense $G_\delta$ subset $\cG$ of $\diff^1(M)$ such that if $f\in\cG$   is away from homoclinic tangencies, then one has the following properties.
\begin{itemize}

\item There exists a residual subset of $\cR$ of $C^0(M)$ such that for each $\varphi\in \cR$, the maximizing measure of $\varphi$ is unique and has zero entropy.

\item If $\Lambda$ is an isolated transitive set, then there exists a residual subset of $\cR_\La$ of $C^0(\Lambda)$ such that for each $\varphi\in \cR_\La$, the maximizing measure of $\varphi$ is unique and has zero entropy. Moreover, the support of the maximizing measure is $\Lambda$.

\end{itemize}
%
%
\end{theoremalph}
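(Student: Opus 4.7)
The plan is to deduce the statement from the abstract ergodic-optimization criterion developed earlier in the paper by verifying, for $C^1$-generic diffeomorphisms away from homoclinic tangencies, the two dynamical inputs that criterion requires: weak-$*$ density of periodic measures in $\cM_{inv}(f)$, and upper semicontinuity of the metric entropy map $\mu\mapsto h_\mu(f)$.

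The first step is to specify the dense $G_\delta$ subset $\cG\subset\diff^1(M)$ as the intersection of several classical $C^1$-generic residuals: the Abdenur--Bonatti--Crovisier-type residual on which every chain recurrence class of $f$ is a homoclinic class, periodic measures supported in a class are weak-$*$ dense in the invariant measures supported on that class, and fully-supported ergodic measures form a $G_\delta$-dense subset of those invariant measures; and the residual coming from \cite{CSY} ensuring that, under the away-from-tangencies hypothesis, every chain class carries a dominated splitting of appropriate indices, from which upper semicontinuity of $h_\mu(f)$ on $\cM_{inv}(f)$ follows.

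For the first bullet, I would observe that every ergodic invariant measure is supported on a single chain class, so density of periodic measures inside each class combined with the Choquet decomposition gives weak-$*$ density of periodic measures in all of $\cM_{inv}(f)$. The abstract criterion then yields the residual $\cR\subset C^0(M)$: uniqueness of the maximizing measure is the Mazur-type residuality of points of G\^ateaux differentiability of the continuous convex function $\beta$, whose subdifferential at $\varphi$ is $\cM_{max}(\varphi)$; the zero-entropy conclusion uses that $E(\varphi):=\sup_{\mu\in\cM_{max}(\varphi)}h_\mu(f)$ is upper semicontinuous in $\varphi$ (so each sublevel set $\{E<1/n\}$ is open), while a bump perturbation $\varphi_0+\psi$ with $\psi\leq 0$ vanishing on a nearly-optimal periodic orbit $\orb(p)$ makes the zero-entropy periodic measure $\mu_p$ the unique maximizer of $\varphi_0+\psi$, which gives density of $\{E=0\}$ in $C^0(M)$.

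For the second bullet, an isolated transitive set is automatically a chain recurrence class, so the same argument applied to $f|_\La$ yields $\cR_\La$ with unique zero-entropy maximizers. The full-support conclusion requires one more Baire-category layer: for a countable base $\{U_i\}$ of $\La$, I would argue that $\{\varphi\in C^0(\La):\exists\mu\in\cM_{max}(\varphi)$ with $\mu(U_i)>0\}$ is residual, via the following perturbation --- given $\varphi_0$ all of whose maximizers vanish on $U_i$, pick a fully-supported ergodic $\nu^*$ with $\int\varphi_0\ud\nu^*$ arbitrarily close to $\beta(\varphi_0)$ (possible because full-support ergodic measures are residual in $\cM_{inv}(f|_\La)$) and add a small nonnegative bump supported in $U_i$ whose integral against $\nu^*$ exceeds the gap $\beta(\varphi_0)-\int\varphi_0\ud\nu^*$; this forces every maximizer of the perturbed function to charge $U_i$. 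Intersecting over $i$ with the uniqueness residual produces a $\varphi$-residual set on which the unique maximizer charges every nonempty open subset, that is, has support equal to $\La$. I expect the main technical obstacle to lie in this last step, since $\varphi\mapsto\cM_{max}(\varphi)$ is only upper semicontinuous and controlling the support quantitatively requires both the density of periodic orbits inside $U_i$ and the residuality of full-support ergodic measures in $\cM_{inv}(f|_\La)$; the rest is a standard assembly of $C^1$-generic results with the abstract criterion.
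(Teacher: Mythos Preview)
Your high-level strategy coincides with the paper's: verify properties $\cS$, $\cP_e^+$ and (for the second item) $\cF_s$ for $C^1$-generic $f$ away from tangencies, then invoke Theorem~D'. The paper cites \cite{LVY} for $\cS$, Ma\~n\'e's ergodic closing lemma for $\cP_e$, and \cite[Theorem~3.5]{ABC} for $\overline{\cM_{erg}(\Lambda)}=\cM_{inv}(\Lambda)$ together with generic full support; your choice of references is different but equivalent.

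One overclaim should be corrected. Choquet plus class-wise density of periodic measures does \emph{not} give weak-$*$ density of periodic measures in all of $\cM_{inv}(f)$: if $p,q$ lie in distinct chain classes then $\tfrac12(\delta_{\orb(p)}+\delta_{\orb(q)})$ is not approximable by a single periodic measure. This is harmless, since Theorem~D' only requires $\cP_e^+$, which your argument does establish. In the same vein, your bump sketch for the zero-entropy part of the criterion is not correct as written: a nonpositive $\psi$ vanishing exactly on $\orb(p)$ need not make $\mu_p$ the \emph{unique} maximizer of $\varphi_0+\psi$, because a competitor $\nu$ weak-$*$ close to $\mu_p$ can have $|\int\psi\,\ud\nu|$ smaller than $\int\varphi_0\,\ud\nu-\int\varphi_0\,\ud\mu_p$. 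The paper handles this with Lemmas~\ref{l.decomposition}--\ref{l.measure-maximize-unique-function}; since you are invoking Theorem~D' rather than reproving it, this does not damage your argument for Theorem~\ref{thm.homoclinic-class}.

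The one genuine gap is in your hands-on full-support argument, which is where you depart from the paper. You need $\psi\ge 0$ supported in $U_i$ with $\|\psi\|_{C^0}$ small \emph{and} $\int\psi\,\ud\nu^*>\beta(\varphi_0)-\int\varphi_0\,\ud\nu^*$; the latter forces $\|\psi\|_{C^0}$ to be at least of order $(\text{gap})/\nu^*(U_i)$, and nothing you have listed bounds $\nu^*(U_i)$ from below as the gap shrinks, since $\nu^*$ is being pushed toward $\varphi_0$-maximizers that may vanish on $U_i$. The paper avoids this entirely: in item~\ref{i.full-support} of Theorem~\ref{thm.abstract-result} it uses that $\mu\mapsto{\rm supp}(\mu)$ is lower semicontinuous, so generic $\mu\in\overline{\cM_{erg}(\Lambda)}$ is a continuity point of the support map; Lemma~\ref{l.open-dense-measure-give-open-dense-function} transfers this residual set of measures to a residual set of functions, and at such a continuity point ${\rm supp}(\mu_\varphi)$ equals the Hausdorff limit of the supports of nearby fully-supported measures supplied by $\cF_s$, hence equals $\Lambda$. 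For the second bullet you should simply cite $\cF_s$ from \cite{ABC} and apply Theorem~D', exactly as the paper does.
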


In the direction of Theorem~\ref{thm.homoclinic-class}, we would like to formulate the following conjecture.

\begin{Conj}\label{con:with-singularity}
	Let $X$ be a $C^1$-generic vector field  on $M$ and $\Lambda$ be a homoclinic class which is away from homoclinic tangencies, and $\Lambda$ contains singularities.
	
	Then there exists a residual subset of $\cR$ of $C^0(\Lambda)$ such that for each $\varphi\in \cR$, the ergodic maximizing measure of $\varphi|_{\Lambda}$ is unique, has zero entropy, and  has $\Lambda$ as its support.
\end{Conj}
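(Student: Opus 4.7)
The plan is to adapt the proof scheme of Theorem~\ref{thm.singular-hyperbolic} from singular hyperbolic attractors to the more general setting of homoclinic classes containing singularities and lying away from tangencies. Unlike in the singular hyperbolic case, one does not have a priori a global dominated splitting compatible with the flow near the singularities of $\Lambda$; providing such a structure for $C^1$-generic vector fields is itself part of the Palis program for flows, and this is what I expect to be the main obstacle.

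The first step is to establish, for vector fields in a suitable dense $G_\delta$ subset of $\cX^1(M)$, that any homoclinic class $\Lambda$ containing a singularity and away from homoclinic tangencies admits a partially hyperbolic (in fact singular hyperbolic) splitting. Partial results in this direction are available via work around Arroyo--Pujals, Gan--Yang, and Crovisier--Yang; I would either import such results or complete the remaining cases. Once a singular hyperbolic splitting is in hand on $\Lambda$, one can establish density of periodic measures in $\cM_{inv}(X|_\Lambda)$ using Liao-style shadowing and scaled flowboxes adapted to the presence of singularities. This approximation property is the essential analytic ingredient feeding into the Bousch--Br\'emont machinery.

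With the approximation property secured, the genericity statements follow the pattern of Theorems~\ref{thm.singular-hyperbolic} and \ref{thm.homoclinic-class}. For each $n$, define
$$A_n = \{\varphi \in C^0(\Lambda) : \text{every } \mu \in \cM_{max}(\varphi) \text{ has entropy} < 1/n\},$$
which one shows is open and dense by perturbing $\varphi$ with a small nonnegative bump supported near a short periodic orbit whose measure has entropy zero; and define
$$B = \{\varphi \in C^0(\Lambda) : \cM_{max}(\varphi) \text{ is a singleton}\},$$
which is residual by the standard upper semicontinuity of $\varphi \mapsto \cM_{max}(\varphi)$ combined with convexity. The intersection $B \cap \bigcap_n A_n$ then yields the desired residual subset of $C^0(\Lambda)$.

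For the full support claim, I would use that periodic orbits are dense in the homoclinic class $\Lambda$ together with a further perturbation argument: if $\mu \in \cM_{max}(\varphi)$ had support in a proper closed $\phi_t$-invariant subset $K \subsetneq \Lambda$, then adding a small nonnegative bump supported outside $K$ and integrating against an ergodic measure approximating an ergodic measure whose support meets the bump would produce a measure with strictly larger integral, contradicting maximization after a generic perturbation. The principal obstacle remains the structural step: once singular hyperbolicity (or an adequate dominated structure) is available for generic homoclinic classes with singularities away from tangencies, the ergodic-optimization portion of the argument is a routine adaptation of the strategy underlying Theorems~\ref{thm.singular-hyperbolic} and \ref{thm.homoclinic-class}.
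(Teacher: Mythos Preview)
The statement you are trying to prove is a \emph{conjecture} in the paper, not a theorem; the paper does not provide a proof and explicitly states that ``the main difficulty of Conjecture~\ref{con:with-singularity} comes from the upper semi-continuity of the metric entropy.'' Your proposal is therefore not being compared against an existing proof but against an acknowledged open problem.

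You correctly recognise that the structural step is the obstacle, and your conditional outline (``once singular hyperbolicity is available, the rest is routine'') is in spirit aligned with the paper's framework: singular hyperbolicity would yield entropy expansiveness (as in Theorem~\ref{Thm:singular-usc}) and hence property~$\cS$, after which Theorem~\ref{thm.abstract-result} applies. However, this is precisely the open part. The availability of a singular hyperbolic splitting on a generic homoclinic class with singularities, away from tangencies, in arbitrary dimension, is not established by the references you invoke; and even a dominated splitting need not yield upper semi-continuity of entropy across singularities (this is the subtlety the paper flags, pointing to \cite{SYY}).

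There is also a concrete gap in your sketch of the zero-entropy part: you assert that each $A_n$ is open, but openness of $A_n$ requires that if every $\mu\in\cM_{max}(\varphi)$ has $h_\mu<1/n$ then the same holds for nearby $\varphi'$. This uses both the upper semi-continuity of $\varphi\mapsto\cM_{max}(\varphi)$ (which you have) \emph{and} the upper semi-continuity of $\mu\mapsto h_\mu$ (which is exactly property~$\cS$, the missing ingredient). Your bump-perturbation argument addresses density, not openness. Likewise, your full-support argument via bumps does not produce a residual set as written; the paper's mechanism (Item~\ref{i.full-support} of Theorem~\ref{thm.abstract-result}) instead goes through property~$\cF_s$, which for generic vector fields is supplied by Theorem~\ref{Thm:generic-measure}. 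In short, your outline reduces the conjecture to the same obstruction the paper identifies, but does not resolve it.
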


The main difficulty of Conjeture~\ref{con:with-singularity} comes from the upper semi continuity of the metric entropy. One can see some recent progress in~\cite{SYY}.

\subsection{A uniform mechanism}

For studying non-hyperbolic systems as in Theorems~\ref{thm.singular-hyperbolic}, ~\ref{Thm:surface-diff} and~\ref{thm.homoclinic-class}, we find some uniform mechanisms behind. We list several important properties for dynamical systems.
\begin{itemize}

\item A flow $(\phi_t)_{t\in\RR}$ on $K$ is said to have the \emph{property $\cP$} if each invariant measure of $(\phi_t)_{t\in\RR}$ is approximated by periodic orbits.

\item A flow $(\phi_t)_{t\in\RR}$ on $K$ is said to have the \emph{property $\cP_e$} if each \emph{ergodic} measure of $(\phi_t)_{t\in\RR}$ is approximated by periodic orbits. 

\item A flow $(\phi_t)_{t\in\RR}$ on $K$ is said to have the \emph{property $\cP_e^+$} if each \emph{ergodic} measure of $(\phi_t)_{t\in\RR}$ with positive metric entropy is approximated by periodic measures. 

\item A flow $(\phi_t)_{t\in\RR}$ on $K$ is said to have the \emph{property $\mathcal{S}$} if the metric entropy of $(\phi_t)_{t\in\RR}$ varies upper semi-continuously, that is, if a sequence $\{\mu_n\}$ of invariant measures of $(\phi_t)_{t\in\RR}$ converges to $\mu$, 
then $h_\mu(\phi_t)\ge\limsup_{n\to\infty}h_{\mu_n}(\phi_t)$.

\item A flow $(\phi_t)_{t\in\RR}$ on $K$ is said to have the \emph{property $\mathcal{F}_s$} if there is a dense $G_\delta$ set $\cM_R\subset \overline{\cM_{erg}(\phi_t)}$ such that the support of any invariant measure $\mu\in\cM_R$ is $K$.

\end{itemize}
It is clear by definition that the property $\cP$ is stronger that $\cP_e^+$.

\smallskip

Given a dynamical system, a generic continuous function has   a unique maximal measure. This was proved by Bousch \cite[Section 8]{Bou} for uniformly hyperbolic systems and by Br\'emont \cite{Br} for general dynamical systems\footnote{Br\'emont \cite{Br} gave a deep argument, but without precious statements that we will need.}. Under the conditions $\cP$ and $\cS$, by applying the arguments of Br\'emont, one knows that for generic continuous functions, the unique maximizing measure has  zero entropy. Under Bowen's specification property \cite{Bowen}, one knows that the maximizing measure has full support. Now we can generalize the results by weakening the property $\cP$ to $\cP_e^+$.

\begin{theoremalph}~\label{thm.abstract-result}
	Let $(\phi_t)_{t\in\RR}$ be a continuous flow on a compact metric space $K$. Then we have the following properties.
	
	\begin{enumerate}
	
	\item\label{i.uniqueness} There exists a residual subset $\cR_1$ of $C^0(K)$ such that for each $\varphi\in\cR_1$,  the set $\cM_{max}(\varphi)$ consists of a unique measure.
	\item\label{i.zero-etrnopy} If $(\phi_t)_{t\in\RR}$ has the properties $\cP_e^+$ and $\cS$,  then there exists a residual subset $\cR_2$ of $C^0(K)$ such that for each $\varphi\in\cR_2$, the set $\cM_{max}(\varphi)$ consists of a unique measure which has zero entropy.
	
	\item\label{i.full-support} If   $(\phi_t)_{t\in\RR}$ has the property $\cF_s$,    then there exists a residual subset $\cR_3$ of $C^0(K)$ such that for each $\varphi\in\cR_3$,  the set $\cM_{max}(\varphi)$ consists of a unique measure which has full support.
	
	\end{enumerate}
%
\end{theoremalph}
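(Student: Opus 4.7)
The plan is a Baire category argument in the separable Banach space $C^0(K)$, using that $\beta:C^0(K)\to\RR$ is convex and $1$-Lipschitz and that $\cM_{max}$ is upper semi-continuous (weak-$*$ topology) with non-empty compact convex values.

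For (1), fix a countable dense family $\{f_j\}\subset C^0(K)$ and set
\begin{equation*}
E_{j,m}:=\Bigl\{\varphi\in C^0(K):\sup_{\mu,\nu\in\cM_{max}(\varphi)}\int f_j\,\ud(\mu-\nu)<1/m\Bigr\}.
\end{equation*}
Upper semi-continuity of $\cM_{max}$ makes each $E_{j,m}$ open; density follows from the convexity of $t\mapsto\beta(\varphi+tf_j)$, which is differentiable outside a countable set, and at any such $t$ the slope $\int f_j\,\ud\mu$ is forced to be constant on $\cM_{max}(\varphi+tf_j)$. Then $\cR_1:=\bigcap_{j,m}E_{j,m}$ is residual, and density of $\{f_j\}$ forces $\cM_{max}(\varphi)$ to be a singleton for every $\varphi\in\cR_1$; the ergodic-component stability of $\cM_{max}$ recalled in the introduction makes this singleton automatically ergodic.

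For (2), u.s.c.\ of $\cM_{max}$ combined with hypothesis $\cS$ makes $H(\varphi):=\sup_{\mu\in\cM_{max}(\varphi)}h_\mu$ upper semi-continuous, so each $\{H<1/n\}$ is open; the set $\cR_2:=\cR_1\cap\bigcap_n\{H<1/n\}$ will be residual once each $\{H<1/n\}$ is shown dense. Since entropy is affine under the ergodic decomposition, $H(\varphi)\ge\delta$ forces some ergodic $\mu_0\in\cM_{max}(\varphi)$ with $h_{\mu_0}\ge\delta$, and $\cP_e^+$ supplies periodic measures $\nu_k\to\mu_0$ with $\int\varphi\,\ud\nu_k\to\beta(\varphi)$. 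A Conze--Guivarc'h style penalty $\tilde\varphi:=\varphi-\eta\,d(\cdot,\orb(\nu_k))$ preserves $\int\tilde\varphi\,\ud\nu_k$, so for any $\mu\in\cM_{max}(\tilde\varphi)$ the inequality $\int\tilde\varphi\,\ud\mu\ge\int\tilde\varphi\,\ud\nu_k$ rearranges to
\begin{equation*}
\eta\int d(\cdot,\orb(\nu_k))\,\ud\mu\le\beta(\varphi)-\int\varphi\,\ud\nu_k,
\end{equation*}
with the right side tending to $0$ as $k\to\infty$. By Chebyshev, $\mu$ concentrates in a tube of vanishing radius around $\orb(\nu_k)$ and converges weakly to the unique invariant measure $\nu_k$ on this orbit. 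Choosing $\eta\,\diam(K)<\delta$ and $k$ large puts $\cM_{max}(\tilde\varphi)$ into any prescribed weak-$*$ neighborhood of the zero-entropy $\nu_k$, and u.s.c.\ of entropy then yields $H(\tilde\varphi)<1/n$.

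For (3), write $\cM_R=\bigcap_n\cO_n$ with each $\cO_n$ open dense in $\overline{\cM_{erg}}$; extend each $\cO_n$ to $\widetilde\cO_n$ open in $\cM_{inv}$ with $\widetilde\cO_n\cap\overline{\cM_{erg}}=\cO_n$, and let $W_n:=\{\varphi:\cM_{max}(\varphi)\subset\widetilde\cO_n\}$, which is open by u.s.c.\ and compactness. For density of $W_n$, given $\varphi$ and $\epsilon>0$, first use (1) to obtain $\varphi'\in\cR_1$ within $\epsilon/2$ of $\varphi$ with ergodic unique maximizer $\mu'\in\overline{\cM_{erg}}$; since $\beta(\varphi')=\sup_{\xi\in\cM_{erg}}\int\varphi'\,\ud\xi$ and $\cO_n$ is dense in $\overline{\cM_{erg}}$, choose $\nu\in\cO_n$ weakly close to $\mu'$ with $\int\varphi'\,\ud\nu$ close to $\beta(\varphi')$, then a continuous $\rho$ with $\int\rho\,\ud\nu>\int\rho\,\ud\mu'$; for sufficiently small $\eta>0$, the perturbation $\tilde\varphi:=\varphi'+\eta\rho$ forces $\cM_{max}(\tilde\varphi)$ into a weak-$*$ neighborhood of $\nu$ that can be arranged to lie inside the open set $\widetilde\cO_n$. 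Then $\cR_3:=\cR_1\cap\bigcap_n W_n$ is residual, and for $\varphi\in\cR_3$ the unique maximizer lies in $\bigcap_n\widetilde\cO_n\cap\overline{\cM_{erg}}=\cM_R$ and so has support $K$. The principal obstacle in both (2) and (3) is exactly the calibration of this perturbation: it has to be uniformly small in $C^0$ yet strong enough in weak-$*$ to defeat every invariant measure close to the target, which is why the density step in (2) requires the sharp approximability $\int\varphi\,\ud\nu_k\to\beta(\varphi)$ supplied by $\cP_e^+$ (not merely approximation by periodic orbits) and why, in (3), $\nu$ must be chosen weakly close to $\mu'$ and simultaneously almost maximizing before the small perturbation $\eta\rho$ is applied.
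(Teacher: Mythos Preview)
Your Item~\ref{i.uniqueness} is correct; the convexity/differentiability argument is a legitimate alternative to the paper's route (which passes to the residual set of continuity points of $\varphi\mapsto\cM_{max}(\varphi)$ and then invokes a density lemma). The density arguments in Items~\ref{i.zero-etrnopy} and~\ref{i.full-support}, however, have a genuine gap. In Item~\ref{i.zero-etrnopy}, the penalty $\tilde\varphi=\varphi-\eta\,d(\cdot,\orb(\nu_k))$ does yield $\int d(\cdot,\orb(\nu_k))\,\ud\mu\le(\beta(\varphi)-\int\varphi\,\ud\nu_k)/\eta$ for every $\mu\in\cM_{max}(\tilde\varphi)$, but this does \emph{not} force $\mu$ to be weak-$*$ close to $\nu_k$. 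Since $h_{\mu_0}>0$, the periods of $\orb(\nu_k)$ are unbounded and $\orb(\nu_k)$ eventually meets every open set of positive $\mu_0$-measure; once $\orb(\nu_k)$ is $\epsilon_k$-dense in $\supp(\mu_0)$ the inequality is satisfied by \emph{every} invariant measure on $\supp(\mu_0)$ whenever $c_k/\eta\ge\epsilon_k$, so it gives no control. Even granting that $\mu$ is weak-$*$ close to $\nu_k$, your appeal to u.s.c.\ of entropy at $\nu_k$ cannot give $h_\mu<1/n$ uniformly in $k$: the neighborhood of $\nu_k$ on which entropy is below $1/n$ must exclude $\mu_0$ and therefore shrinks to nothing as $\nu_k\to\mu_0$. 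In Item~\ref{i.full-support} the same defect appears: adding $\eta\rho$ with $\int\rho\,\ud\nu>\int\rho\,\ud\mu'$ keeps $\cM_{max}(\varphi'+\eta\rho)$ near $\mu'$ (by upper semi-continuity) for small $\eta$, not near $\nu$, and there is no mechanism forcing it into $\widetilde\cO_n$ when $\mu'\notin\cO_n$.

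What is missing is exactly the Br\'emont--Morris--Jenkinson perturbation machinery that the paper packages as its key density lemma: for any dense $\cU\subset\cM_{erg}(\phi_t)$, the set of $h\in C^0(K)$ whose maximizing measure is unique and belongs to $\cU$ is dense in $C^0(K)$. (The point is that Br\'emont's lemma first produces a nearby function for which the prescribed $\nu$ is \emph{actually} maximizing, after which Jenkinson's function with $\cM_{max}=\{\nu\}$ can be added to make it uniquely maximizing; a single penalty or a single $\rho$ cannot do this.) The paper then applies this with $\cU$ lying inside the open dense sets whose intersection is the set of continuity points of $\mu\mapsto h_\mu(\phi_t)$ on $\overline{\cM_{erg}(\phi_t)}$ (for Item~\ref{i.zero-etrnopy}) or of $\mu\mapsto\supp(\mu)$ (for Item~\ref{i.full-support}). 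The unique maximizer of a generic $\varphi$ is thus a \emph{continuity} point of entropy, and $\cP_e^+$ combined with continuity at that point---not u.s.c.\ at the periodic approximants, which points in the wrong direction---forces zero entropy; likewise continuity of the support map together with $\cF_s$ forces full support.
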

\begin{Remark}
We remark that the first item has been obtained in \cite{J1}. 
\end{Remark}
{We will apply Theorem~\ref{thm.abstract-result} to Theorem~\ref{thm.singular-hyperbolic} for $C^1$ generic singular hyperbolic vector field because one can have the periodic approximation property in this case.}

One has the following Theorem D' which is a homeomorphism version of Theorem~\ref{thm.abstract-result}. The proof will be the same, hence omitted.
\begin{Theorem D'}
	Let $h$ be a homeomorphism on a compact metric space $K$. Then we have the following properties.  	
	
	\begin{enumerate}
	
	\item\label{i.uniqueness-h} There exists a residual subset $\cR_1$ of $C^0(K)$ such that for each $\varphi\in\cR_1$, the set $\cM_{max}(\varphi)$ consists of a unique measure.
	
	\item\label{i.zero-etrnopy-h} If $h$ has the properties $\cP_e^+$ and $\cS$, then there exists a residual subset $\cR_2$ of $C^0(K)$ such that for each $\varphi\in\cR_2$, the set $\cM_{max}(\varphi)$ consists of a unique measure which has zero entropy.
	
	\item\label{i.full-support-h} If  $h$ has the property $\cF_s$, then there exists a residual subset $\cR_3$ of $C^0(K)$ such that for each $\varphi\in\cR_3$,  the set $\cM_{max}(\varphi)$ consists of a unique measure which has full support.
	
	\end{enumerate}
\end{Theorem D'}

Uniformly hyperbolic systems have the property $\cP$ and the property $\cF_s$ by Sigmund \cite{Si} and the property $\cS$ by Bowen~\cite{B72}. For uniformly hyperbolic diffeomorphisms, Theorem~\ref{thm.abstract-result} has been obtained, see for instance   \cite{Bou,Br}.
%
%

\section{The abstract version}

In this section, we collect the basic notions and results used in this paper.

\subsection{Some properties of maximizing measures}
 Recall that for any two probability measures $\mu$ and $\nu$ on a compact metric space, the distance between $\mu$ and $\nu$  can be defined as the following:
$$\ud(\mu,\nu)=\sum_{n=1}^\infty\frac{\big|\int \varphi_n {\rm d}\mu-\int \varphi_n {\rm d}\nu\big|}{2^n\cdot\|\varphi_n\|_{C^0}},$$
where $\{\varphi_n\}_{n\in\NN}$ is a dense subset of $C^0(K)$.
This gives the weak*-topology of the space of probability measures. 

We would like to give a list of simple properties of maximizing measures.

\begin{Proposition}\label{Pro:easy-property}
Assume $(\phi_t)_{t\in\RR}$ is a flow on $K$ and $\varphi$ is a continuous function on $K$. Then
\begin{enumerate}
\item\label{i.linear-comb} If $\mu\in \cM_{max}(\varphi)$, and there is $\lambda\in(0,1)$ such that $\mu=\lambda\cdot\mu_1+(1-\lambda)\cdot\mu_2$ for some invariant measures $\mu_1$ and $\mu_2$, then $\mu_i\in \cM_{max}(\varphi)$ for $i=1,2$.
\item\label{i.ergodic-component}If $\mu\in\cM_{max}(\varphi)$, then any ergodic component of $\mu$ is contained in $\cM_{max}(\varphi)$.
\item\label{i.linear-combination}Given two continuous functions $\varphi_1$ and $\varphi_2$, if $\mu\in\cM_{max}(\varphi_1)\cap\cM_{max}(\varphi_2)$, then for any $a,b\in\RR^+$, one has that $\mu\in\cM_{max}(a\varphi_1+b\varphi_2)$.
\item\label{i.usc-maximizing} The map $h\in C^0(K)\mapsto\cM_{max}(h)$ is upper semi-continuous.	

\end{enumerate}

\end{Proposition}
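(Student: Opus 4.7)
The plan is to prove the four items in order, each by a short direct computation. All four are standard calculations from the definition $\beta(\varphi)=\sup_\nu\int\varphi\,\ud\nu$; the only real input is the ergodic decomposition theorem (for item~\ref{i.ergodic-component}) and the continuity of the integration pairing (for item~\ref{i.usc-maximizing}). I do not anticipate any genuine obstacle.

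For item~\ref{i.linear-comb}, I would evaluate $\int\varphi\,\ud\mu=\lambda\int\varphi\,\ud\mu_1+(1-\lambda)\int\varphi\,\ud\mu_2$. Since the left side equals $\beta(\varphi)$ and each $\int\varphi\,\ud\mu_i\le\beta(\varphi)$, the only way this convex combination can hit $\beta(\varphi)$ is if both $\int\varphi\,\ud\mu_i$ already equal $\beta(\varphi)$, giving $\mu_i\in\cM_{max}(\varphi)$. For item~\ref{i.ergodic-component}, I would write the ergodic decomposition $\mu=\int\mu_x\,\ud\mu(x)$ with $\mu_x$ ergodic, so that $\beta(\varphi)=\int\!\!\int\varphi\,\ud\mu_x\,\ud\mu(x)$; the pointwise bound $\int\varphi\,\ud\mu_x\le\beta(\varphi)$ forces $\int\varphi\,\ud\mu_x=\beta(\varphi)$ for $\mu$-a.e.\ $x$, which is exactly the claim.

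For item~\ref{i.linear-combination}, I would note that for any invariant $\nu$,
\[
\int(a\varphi_1+b\varphi_2)\,\ud\nu=a\!\int\varphi_1\,\ud\nu+b\!\int\varphi_2\,\ud\nu\le a\beta(\varphi_1)+b\beta(\varphi_2),
\]
with equality realized by $\mu$; hence $\beta(a\varphi_1+b\varphi_2)=a\beta(\varphi_1)+b\beta(\varphi_2)$ and $\mu$ attains it.

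For item~\ref{i.usc-maximizing}, I would spell out upper semi-continuity as: for $h_n\to h$ uniformly and $\mu_n\in\cM_{max}(h_n)$ with $\mu_n\to\mu$ in the weak-$*$ topology, one has $\mu\in\cM_{max}(h)$. The key observation is that $\beta:C^0(K)\to\RR$ is $1$-Lipschitz in the sup norm (since $|\int h\,\ud\nu-\int h'\,\ud\nu|\le\|h-h'\|_{C^0}$ uniformly in $\nu$), so $\beta(h_n)\to\beta(h)$. Splitting
\[
\Bigl|\!\int h\,\ud\mu-\beta(h_n)\Bigr|\le\Bigl|\!\int h\,\ud\mu-\!\int h\,\ud\mu_n\Bigr|+\|h-h_n\|_{C^0},
\]
both terms vanish in the limit, giving $\int h\,\ud\mu=\beta(h)$, i.e.\ $\mu\in\cM_{max}(h)$. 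Invariance of $\mu$ is preserved under weak-$*$ limits, closing the argument.
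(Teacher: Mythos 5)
Your proof is correct and follows essentially the same route as the paper: Items 1--3 are direct consequences of the definition plus ergodic decomposition, and the paper indeed labels them as "almost direct." For Item 4 the paper's argument is also a four-term telescoping estimate showing $\int h\,\ud\mu \geq \int h\,\ud\nu$ for every invariant $\nu$; your reformulation via the $1$-Lipschitz continuity of $\beta$ repackages the same estimates slightly more cleanly but is not a genuinely different idea.
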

The proofs are almost direct. We give a quick proof of Item~\ref{i.usc-maximizing} of Proposition~\ref{Pro:easy-property} 
	\proof[Proof of Item~\ref{i.usc-maximizing} of Proposition~\ref{Pro:easy-property} ]
	Assume that there exist  a sequence of continuous functions $h_n$ converging to $h$ in $C^0$-topology and  a sequence of measures $\mu_n\in\cM_{max}(h_n)$ converging to a measure $\mu$ in weak$*$-topology.  Then for any invariant measure $\nu\in\cM_{inv}(\phi_t)$, one has the following
	\begin{align*}
	&\int h\ud\mu-\int h\ud\nu\\
	&=\big(\int h\ud\mu-\int h\ud\mu_n\big)+\big(\int h\ud\mu_n-\int h_n\ud\mu_n\big) +\big(\int h_n\ud\mu_n-\int h_n\ud\nu\big)+\big(\int h_n\ud\nu-\int h\ud\nu\big)
	\\
	&\geq \big(\int h\ud\mu-\int h\ud\mu_n\big)+\big(\int h\ud\mu_n-\int h_n\ud\mu_n\big) +\big(\int h_n\ud\nu-\int h\ud\nu\big).
	\end{align*}
	By taking the limit on the right hand of the inequality, one gets that
	$\int h\ud\mu\geq\int h\ud\nu$. By the arbitrariness of $\nu$, one has $\mu\in\cM_{max}(h),$ ending the proof of Item~\ref{i.usc-maximizing} of Proposition~\ref{Pro:easy-property}.
	\endproof
	A direct corollary of Item~\ref{i.usc-maximizing} of Proposition~\ref{Pro:easy-property} is the following corollary.
	\begin{Corollary}\label{Cor:continuous-point}
Let $K$ be a compact metric space and $(\phi_t)_{t\in\RR}$ be a continuous flow on 
$K$. Then there is a dense $G_\delta$ set $\widetilde\cF$ in $C^0(K)$ such that  the map $h\in C^0(K)\mapsto\cM_{max}(h)$ is continuous at any $h\in\widetilde\cF$.	

\end{Corollary}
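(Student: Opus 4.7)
The plan is to derive the corollary from Item~\ref{i.usc-maximizing} of Proposition~\ref{Pro:easy-property} via a standard Baire-category argument (a form of Fort's theorem): an upper semi-continuous, compact-valued set-valued map from a Baire space to a metric space is continuous in the Hausdorff metric on a dense $G_\delta$ subset. In our setting $C^0(K)$ is a complete metric space, hence Baire; $\cM_{inv}(\phi_t)$ is compact metrizable in the weak$*$ topology; and each $\cM_{max}(h)$ is a nonempty compact subset.

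First I would fix a countable base $\{V_n\}_{n\in\NN}$ for the weak$*$ topology on $\cM_{inv}(\phi_t)$ and set
$$G_n=\bigl\{h\in C^0(K):\cM_{max}(h)\cap V_n\neq\emptyset\bigr\}.$$
Hausdorff continuity of $h\mapsto\cM_{max}(h)$ at $h$ amounts to both upper and lower semi-continuity; the former is provided everywhere by Item~\ref{i.usc-maximizing} of Proposition~\ref{Pro:easy-property}, while the latter, tested against the base $\{V_n\}$, is equivalent to the implication $h\in G_n\Rightarrow h\in\Int(G_n)$ for every $n$. Hence the set of discontinuity points is exactly $\bigcup_n\bigl(G_n\setminus\Int(G_n)\bigr)$, and it is enough to show this set is meager.

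Next I would observe that for each closed set $C\subset\cM_{inv}(\phi_t)$ the set $\{h:\cM_{max}(h)\cap C\neq\emptyset\}$ is closed in $C^0(K)$: if $h_j\to h$ in $C^0(K)$ and $\mu_j\in\cM_{max}(h_j)\cap C$, compactness yields a subsequential limit $\mu\in C$, and Item~\ref{i.usc-maximizing} of Proposition~\ref{Pro:easy-property} places $\mu$ in $\cM_{max}(h)$. Writing each open $V_n$ as an increasing union of closed sets $V_n=\bigcup_k C_{n,k}$ then exhibits $G_n=\bigcup_k E_{n,k}$ with every $E_{n,k}$ closed, so $G_n$ is $F_\sigma$. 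Each closed set $E_{n,k}\setminus\Int(G_n)$ has empty interior, because any nonempty open $W\subset E_{n,k}\subset G_n$ would satisfy $W\subset\Int(G_n)$, contradicting $W\cap\Int(G_n)=\emptyset$. Consequently $G_n\setminus\Int(G_n)=\bigcup_k\bigl(E_{n,k}\setminus\Int(G_n)\bigr)$ is meager, and an application of the Baire category theorem supplies the desired dense $G_\delta$ set $\widetilde\cF$.

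The only genuine subtlety is the asymmetry of set-valued semi-continuity with respect to the Hausdorff metric: Item~\ref{i.usc-maximizing} of Proposition~\ref{Pro:easy-property} only delivers one of the two inclusions needed for Hausdorff convergence. The hinge of the argument is therefore the closedness of $\{h:\cM_{max}(h)\cap C\neq\emptyset\}$ for closed $C$, which is what turns the upper semi-continuity input into the $F_\sigma$ structure on each $G_n$ that drives the category computation.
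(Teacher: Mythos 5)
Your proof is correct, and it spells out exactly the argument that the paper leaves implicit: the paper offers no proof at all, stating only that the corollary is a ``direct'' consequence of Item~\ref{i.usc-maximizing} of Proposition~\ref{Pro:easy-property}. What you have written is the standard Fort-type theorem (a compact-valued, upper semi-continuous set-valued map from a Baire space into a metric space is Hausdorff-continuous on a dense $G_\delta$), unpacked into a self-contained Baire category argument. The reduction of lower semi-continuity to the conditions $h\in G_n\Rightarrow h\in\Int(G_n)$ against a countable base, and the observation that closedness of $\{h:\cM_{max}(h)\cap C\neq\emptyset\}$ for closed $C$ (which follows from compactness of $\cM_{inv}(\phi_t)$ together with the closed-graph form of upper semi-continuity established in the paper's proof of Item~\ref{i.usc-maximizing}) makes each $G_n$ an $F_\sigma$, are both correct and are precisely the nontrivial content hiding behind the word ``direct.'' One may also note, as a shortcut to the $G_\delta$ structure, that the set of continuity points of any map between metric spaces is automatically $G_\delta$, so the only thing that actually needs the category computation is density; but your argument delivers both at once, which is fine.
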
	
%
%
%
%
%
%

\subsection{The results by Br\'emont, Morris and Jenkinson}

\begin{Lemma}[Lemma 2.1 in \cite{M1}]~\label{l.decomposition}
	Let $K$ be a compact metric space and $(\phi_t)_{t\in\RR}$ be a continuous flow on 
	$K$. Consider two different probability measures   $\mu\in\cM_{inv}(\phi_t)$ and  $\nu\in\cM_{erg}(\phi_t)$. Assume that  there exists $c\in(0,2)$ such that 
	$$\big|\int\varphi\ud\mu-\int\varphi\ud\nu\big|\le c\cdot \|\varphi\|_{C^0} \textrm{ for any $\varphi \in C^0(K)$},$$
	where $\|\cdot\|_{C^0}$ is the $C^0$-norm on the space $C^0(K)$, i.e. $\|\varphi\|_{C^0}=\max_{x\in K} |\varphi(x)|.$ 
	
	Then there exist  $\lambda\in(0,1)$ and a probability measure $\wh\mu\in\cM_{inv}(\phi_t)$ such that $$\mu=\lambda\cdot\nu+(1-\lambda)\wh\mu.$$
	\end{Lemma}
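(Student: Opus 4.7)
The plan is to build the decomposition from a Lebesgue-type argument driven by the ergodicity of $\nu$, and then use the hypothesis $c<2$ to exclude the degenerate mutually-singular case.

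First, I would write the Lebesgue decomposition of $\mu$ with respect to $\nu$,
$$\mu = g\,\nu + \mu_s, \qquad g\in L^1(\nu),\ g\geq 0,\ \mu_s\perp\nu.$$
Applying $\phi_t^*$ to this identity and using that $\mu$ is invariant, $\nu$ is invariant, and the Lebesgue decomposition of $\mu$ with respect to $\nu$ is unique, one obtains that both $g\,\nu$ and $\mu_s$ are $(\phi_t)$-invariant. Since $g\,\nu$ is a finite $(\phi_t)$-invariant measure absolutely continuous with respect to the ergodic measure $\nu$, the standard consequence of ergodicity forces $g$ to be $\nu$-a.e.\ constant, say $g\equiv\lambda$. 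Because $\mu(K)=\nu(K)=1$, one has $\lambda+\mu_s(K)=1$, so $\lambda\in[0,1]$ and
$$\mu = \lambda\,\nu + \mu_s.$$

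Next, I would use the hypothesis to force $\lambda\in(0,1)$. The case $\lambda=1$ is immediately excluded by $\mu\neq\nu$, since then $\mu_s=0$. To exclude $\lambda=0$: if $\lambda=0$ then $\mu\perp\nu$, so there is a Borel set $A$ with $\nu(A)=\mu(A^c)=1$. By inner regularity of Borel probabilities on the compact metric space $K$, for any $\e>0$ one can choose disjoint compact sets $K_1\subset A$ and $K_2\subset A^c$ with $\nu(K_1)\geq 1-\e$ and $\mu(K_2)\geq 1-\e$, and by Urysohn's lemma produce $\varphi\in C^0(K)$ with $\|\varphi\|_{C^0}\leq 1$ equal to $1$ on $K_1$ and to $-1$ on $K_2$. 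Then
$$\int\varphi\,\ud\nu - \int\varphi\,\ud\mu \;\geq\; (1-2\e) - (-(1-2\e)) \;=\; 2-4\e,$$
which contradicts the hypothesis once $\e<(2-c)/4$. Hence $\lambda\in(0,1)$, and $\hat\mu:=\mu_s/(1-\lambda)$ is an invariant probability measure (invariance descending from that of $\mu_s$) giving $\mu=\lambda\nu+(1-\lambda)\hat\mu$.

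The main point to get right is the rigidification step: ergodicity of the flow forcing the Radon--Nikodym derivative $g$ to be $\nu$-a.e.\ constant, since here the continuous-time parameter interacts delicately with almost-everywhere equalities, and one must select a genuinely invariant version of $g$ using a countable dense set of times together with the fact that $g\,\nu$ is an honest $(\phi_t)$-invariant finite measure. Once this is in place, the remainder is soft: the correspondence between the $C^0$-duality bound in the hypothesis and mutual singularity of probability measures (equivalently, total-variation distance $2$) is exactly what turns $c<2$ into $\lambda>0$.
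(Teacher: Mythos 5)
Your proof is correct. Note that the paper itself does not prove this lemma: it cites Morris's Lemma 2.1, which is stated for discrete-time systems, and then in a separate Remark reduces the flow statement to the discrete one by invoking Pugh--Shub to find $t_0>0$ for which $\nu$ remains ergodic under $\phi_{t_0}$, applying Morris's result to $\phi_{t_0}$, and averaging the resulting $\phi_{t_0}$-invariant measure $\widehat\mu_0$ over $[0,t_0]$ to produce the flow-invariant $\widehat\mu$. You instead prove the flow statement directly: Lebesgue-decompose $\mu=g\nu+\mu_s$, use uniqueness of the decomposition to get that both $g\nu$ and $\mu_s$ are flow-invariant, use ergodicity of $\nu$ to force $g$ to be a constant $\lambda$ (the rigidification step you flag is real but handled cleanly by noting that $g\circ\phi_{-t}=g$ in $L^2(\nu)$ for every $t$, and the only such Koopman-fixed vectors under a flow-ergodic measure are the constants), and then exclude $\lambda=1$ via $\mu\neq\nu$ and $\lambda=0$ via the Urysohn argument showing mutual singularity would force total-variation distance $2$, contradicting $c<2$. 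The underlying mechanism (Lebesgue decomposition plus ergodicity plus the $c<2$ bound excluding singularity) is the same as in Morris's original argument; what differs is that you carry it out natively for flows, which is a bit longer but self-contained, whereas the paper's remark is shorter at the cost of importing Pugh--Shub and the time-averaging trick.
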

	
The following result is obtained in \cite{Br} (see \cite{JM,PS} for similar results in other context), which shows that if an invariant measure almost optimizes a  continuous function, then one can perturb the continuous function and the measure such that the perturbed measure maximizes the perturbed function, to be precise:
\begin{Lemma}[\cite{Br}]~\label{l.perturbation-lemma}
	Let $K$ be a compact metric space and $(\phi_t)_{t\in\RR}$ be a continuous flow on 
$K$. Consider $\varphi\in C^0(K)$ and $\mu\in\cM_{inv}(\phi_t)$. If one has  $$\sup_{\nu\in\cM_{inv}(\phi_t)}\int\varphi\ud\nu=\int\varphi\ud\mu+\e \cdot\delta$$ for some small numbers $\e,\delta\geq 0$, then there exist $\wh\varphi\in C^0(K)$ and $\wh\mu\in\cM_{max}(\wh\varphi)$ such that
\begin{itemize}
	\item $\|\varphi-\wh\varphi\|_{C^0}\leq\e;$
	\item $|\int f\ud\mu-\int f\ud\wh\mu|\leq \delta\cdot\|f\|_{C^0}$ for any $f\in C^0(K)$.
\end{itemize}
\end{Lemma}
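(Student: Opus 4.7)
The plan is to exploit convex duality between the $C^0$ norm on functions and the dual (total variation) norm on signed measures, together with a variational selection of $\wh\mu$. Observe first that the condition $|\int f\,\ud\mu-\int f\,\ud\wh\mu|\leq\delta\|f\|_{C^0}$ for all $f\in C^0(K)$ is precisely the statement $\|\mu-\wh\mu\|_*\leq\delta$, where $\|\cdot\|_*$ denotes the dual norm on $C^0(K)^*$ and $\cM_{inv}(\phi_t)$ is viewed as a weak*-compact convex subset.

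First I would construct $\wh\mu$ as a maximizer of the penalized functional
$$G(\nu):=\int\varphi\,\ud\nu-\e\,\|\nu-\mu\|_*\qquad\text{on }\cM_{inv}(\phi_t).$$
Since $\|\nu-\mu\|_*=\sup_{\|f\|_{C^0}\leq 1}\int f\,\ud(\nu-\mu)$ is weak*-lower semi-continuous as a supremum of weak*-continuous functionals, $G$ is weak*-upper semi-continuous on the weak*-compact $\cM_{inv}(\phi_t)$ and hence attains its maximum at some $\wh\mu$. From $G(\wh\mu)\geq G(\mu)=\int\varphi\,\ud\mu$ combined with $\int\varphi\,\ud\wh\mu\leq\beta(\varphi)=\int\varphi\,\ud\mu+\e\delta$, I would read off $\e\,\|\wh\mu-\mu\|_*\leq\e\delta$, which is the second conclusion. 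Combining the inequality $G(\nu)\leq G(\wh\mu)$ with the triangle inequality $\|\nu-\mu\|_*-\|\wh\mu-\mu\|_*\leq\|\nu-\wh\mu\|_*$ then yields the slope bound
$$\int\varphi\,\ud(\nu-\wh\mu)\leq\e\,\|\nu-\wh\mu\|_*\qquad\text{for every }\nu\in\cM_{inv}(\phi_t).$$

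Next I would produce $\wh\varphi$ via Hahn-Banach/Fenchel duality. Let $S_{\wh\mu}:=\{f\in C^0(K):\wh\mu\in\cM_{max}(f)\}$, which is a closed convex cone; any $\wh\varphi\in S_{\wh\mu}$ satisfies $\wh\mu\in\cM_{max}(\wh\varphi)$, so it suffices to show $d_{C^0}(\varphi,S_{\wh\mu})\leq\e$. By the Hahn-Banach distance-to-cone formula, this distance equals $\sup\{L(\varphi):L\in S_{\wh\mu}^\circ,\,\|L\|_*\leq 1\}$, while the bipolar theorem identifies $S_{\wh\mu}^\circ$ with the weak*-closed conic hull of $\{\nu-\wh\mu:\nu\in\cM_{inv}(\phi_t)\}$ inside $C^0(K)^*$. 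Convexity of $\cM_{inv}(\phi_t)$ makes this conic hull already equal to $\{\lambda(\nu-\wh\mu):\lambda\geq 0,\,\nu\in\cM_{inv}(\phi_t)\}$, and the slope bound immediately delivers $L(\varphi)=\lambda\int\varphi\,\ud(\nu-\wh\mu)\leq\e\|L\|_*$ for every such generator.

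The hardest part will be transferring the generator-level inequality to the full weak*-closure of the conic hull, since the dual norm is only weak*-lower semi-continuous and can drop under weak*-limits of bounded nets. I expect to resolve this either by a renormalization argument that exploits the weak*-compactness of $\cM_{inv}(\phi_t)$ to reduce weak*-limits of generators to the bounded-$\lambda$ regime (which stays inside the conic hull itself), or alternatively by invoking Sion's minimax theorem applied to the bilinear pairing $(h,\nu)\mapsto\int(h+\varphi)\,\ud(\nu-\wh\mu)$ on $\{h\in C^0(K):\|h\|_{C^0}\leq\e\}\times\cM_{inv}(\phi_t)$, with a final density/regularization step to ensure that the resulting perturbation $\wh\varphi$ lies in $C^0(K)$ rather than merely in a bounded measurable completion.
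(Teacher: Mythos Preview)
The paper does not give its own proof of this lemma; it is quoted from Br\'emont~\cite{Br}, with only the remark following Lemma~\ref{l.measure-maximize-unique-function} explaining how to transfer such statements from maps to flows. So there is no argument in the paper to compare against beyond the citation.

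On the merits of your attempt: the variational construction of $\wh\mu$ is correct. Maximizing $G(\nu)=\int\varphi\,\ud\nu-\e\,\|\nu-\mu\|_*$ over the weak*-compact convex set $\cM_{inv}(\phi_t)$ yields a maximizer $\wh\mu$, the comparison $G(\wh\mu)\ge G(\mu)$ gives $\|\wh\mu-\mu\|_*\le\delta$, and the comparison $G(\wh\mu)\ge G(\nu)$ together with the triangle inequality gives the slope bound $\int\varphi\,\ud(\nu-\wh\mu)\le\e\,\|\nu-\wh\mu\|_*$ for every $\nu\in\cM_{inv}(\phi_t)$.

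The difficulty you flag in the second half is, however, a genuine gap, and neither of your two proposed resolutions closes it. For the bipolar route: the conic hull of $D=\{\nu-\wh\mu:\nu\in\cM_{inv}(\phi_t)\}$ is in general \emph{not} weak*-closed. When $\wh\mu$ is extreme in $\cM_{inv}(\phi_t)$, weak*-limits of $\lambda_\alpha(\nu_\alpha-\wh\mu)$ with $\lambda_\alpha\to\infty$ and $\nu_\alpha\to\wh\mu$ can produce signed invariant zero-mass measures $t(\nu^+-\nu^-)$ with $\nu^-$ unrelated to $\wh\mu$; since $\|\cdot\|_*$ is only weak*-lower semicontinuous, the bound $L(\varphi)\le\e\|L\|_*$ does not pass to such limits. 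For the Sion route: the minimax equality does give
\[
\inf_{\|h\|_{C^0}\le\e}\Big(\beta(\varphi+h)-\int(\varphi+h)\,\ud\wh\mu\Big)=0,
\]
but the closed $\e$-ball in $C^0(K)$ is not compact in any topology making the inner functional lower semicontinuous, so the infimum need not be attained by a continuous $h$. Passing to $C^0(K)^{**}$ and then invoking Goldstine-type density only returns approximants $h\in C^0(K)$ for which $\wh\mu$ is an \emph{approximate} maximizer of $\varphi+h$, which is exactly the hypothesis you began with. A complete proof needs an additional idea to force attainment (for example, a Bishop--Phelps--type simultaneous perturbation of the pair $(\varphi,\mu)$, or an explicit realization of a subgradient of $\nu\mapsto\|\nu-\mu\|_*$ by a bounded Borel function followed by a separate argument to land in $C^0(K)$); this is not a routine regularization, and as written the proposal does not supply it.
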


One can find the following lemma in \cite[Theorem 1]{J2}.
\begin{Lemma}\label{l.measure-maximize-unique-function}
	Let $K$ be a compact metric space and $(\phi_t)_{t\in\RR}$ be a continuous flow on 
	$K$. For any $\mu\in\cM_{erg}(\phi_t)$, there exists $\varphi\in C^0(K)$ such that $\cM_{max}(\varphi)=\{\mu \}.$
\end{Lemma}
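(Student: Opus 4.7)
The plan is to realize $\mu$ as the unique maximizer by building $\varphi$ as a countable weighted sum of continuous functions, each of which is maximized by $\mu$ and whose set of maximizers is confined to an ever smaller weak-$*$ neighborhood of $\mu$. Concretely, since $K$ is compact metric, $C^0(K)$ is separable, so the weak-$*$ topology on the space of probability measures on $K$ is metrizable; fix a weak-$*$ metric $\ud$ and let $U_n=\{\nu\in\cM_{inv}(\phi_t):\ud(\nu,\mu)<1/n\}$. I will construct $\psi_n\in C^0(K)$ with $\|\psi_n\|_{C^0}\leq 1$, $\sup_{\nu\in\cM_{inv}(\phi_t)}\int\psi_n\,\ud\nu=\int\psi_n\,\ud\mu=0$, and $\cM_{max}(\psi_n)\subseteq U_n$. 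Setting $\varphi=\sum_{n\geq 1}2^{-n}\psi_n$ then yields a continuous function with the desired property.

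The construction of $\psi_n$ is the main step and uses that $\mu$ is an extreme point of $\cM_{inv}(\phi_t)$ (since $\mu\in\cM_{erg}(\phi_t)$). The natural route is Hahn--Banach separation: if one can show that $\mu$ does not lie in $\overline{\textrm{conv}}(\cM_{inv}(\phi_t)\setminus U_n)$, then there exists a continuous linear functional on $C^0(K)^*$ separating them, i.e.\ some $\tilde\psi_n\in C^0(K)$ with $\int\tilde\psi_n\,\ud\mu>\sup_{\nu\in\cM_{inv}(\phi_t)\setminus U_n}\int\tilde\psi_n\,\ud\nu$. A straightforward check then shows $\cM_{max}(\tilde\psi_n)\subseteq U_n$ (any maximizer in the complement would violate the strict inequality), and translating by a constant gives $\psi_n$ with $\sup_\nu\int\psi_n\,\ud\nu=0=\int\psi_n\,\ud\mu$ after rescaling to unit norm. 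The key point behind the separation is that, thanks to ergodicity, the ergodic decomposition of $\mu$ is $\delta_\mu$; invoking Lemma~\ref{l.decomposition}, no invariant measure $\mu'\neq\mu$ can have $\mu$ as an ergodic component, so $\mu$ is strictly separated from any closed convex set of invariant measures bounded away from $\mu$.

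Once the $\psi_n$ are in hand, the verification is essentially tautological. On the one hand $\int\varphi\,\ud\mu=0$ and for every $\nu\in\cM_{inv}(\phi_t)$ each summand $2^{-n}\int\psi_n\,\ud\nu$ is non-positive, so $\int\varphi\,\ud\nu\leq 0=\int\varphi\,\ud\mu$, proving $\mu\in\cM_{max}(\varphi)$. On the other hand, if $\eta\in\cM_{max}(\varphi)$, then $\int\varphi\,\ud\eta=0$ forces $\int\psi_n\,\ud\eta=0$ for every $n$ (a sum of non-positive terms vanishes iff each vanishes), hence $\eta\in\cM_{max}(\psi_n)\subseteq U_n$ for all $n$, and therefore $\eta=\mu$.

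The hardest part is the separation step, because in a general metrizable compact convex set an extreme point need not be exposed by a continuous linear functional. Here one exploits the specific Choquet-simplex structure of $\cM_{inv}(\phi_t)$ together with the ergodic decomposition: every $\nu$ at weak-$*$ distance $\geq 1/n$ from $\mu$ must carry $\tau_\nu$-positive mass on ergodic measures different from $\mu$, and distinct ergodic measures are mutually singular, which is precisely what powers Lemma~\ref{l.decomposition} and guarantees the desired strict separation. This is the delicate input that upgrades the abstract extremality of $\mu$ into the uniform quantitative separation needed to control $\cM_{max}(\psi_n)$.
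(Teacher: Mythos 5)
The paper does not prove this lemma; it is cited verbatim from Jenkinson \cite[Theorem~1]{J2}, so there is no internal proof to compare against. Your overall scheme---building $\varphi=\sum_n 2^{-n}\psi_n$ where each $\psi_n$ is simultaneously maximized by $\mu$ and has $\cM_{max}(\psi_n)\subseteq U_n$---is sound, and the concluding verification (a sum of non-positive terms vanishes iff each term vanishes) is correct \emph{provided} the $\psi_n$ really do have both stated properties at once. The gap is in the construction of $\psi_n$. Hahn--Banach separation of $\{\mu\}$ from $\overline{\mathrm{conv}}\big(\cM_{inv}(\phi_t)\setminus U_n\big)$ is legitimate (since $\mu$ is extreme in $\cM_{inv}(\phi_t)$, Milman's theorem prevents $\mu$ from lying in that closed convex hull), and it produces a $\tilde\psi_n$ with $\int\tilde\psi_n\,\ud\mu>\sup_{\nu\notin U_n}\int\tilde\psi_n\,\ud\nu$, which indeed forces $\cM_{max}(\tilde\psi_n)\subseteq U_n$. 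But nothing in Hahn--Banach makes $\mu$ a \emph{maximizer} of $\tilde\psi_n$: the global supremum $\sup_{\nu\in\cM_{inv}}\int\tilde\psi_n\,\ud\nu$ may well be attained at some $\nu\in U_n\setminus\{\mu\}$ strictly above $\int\tilde\psi_n\,\ud\mu$. Consequently the step ``translating by a constant gives $\psi_n$ with $\sup_\nu\int\psi_n\,\ud\nu=0=\int\psi_n\,\ud\mu$'' is unjustified---you can normalize one equality or the other, but not both---and without $\mu\in\cM_{max}(\psi_n)$ for every $n$ the intersection $\bigcap_n\cM_{max}(\psi_n)$ may be empty, in which case $\cM_{max}(\varphi)$ need not contain $\mu$ at all.

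This is precisely the distinction between \emph{extreme} and \emph{exposed} points, and the Hahn--Banach step only uses the former. Invoking Lemma~\ref{l.decomposition} does not repair it: that lemma operates under a \emph{total variation} hypothesis ($c<2$), whereas $U_n$ is a \emph{weak-$*$} ball, and the separation you obtain carries no total variation information. Nor does the Choquet-simplex structure alone promote extremality to exposedness; in general compact convex metrizable sets an extreme point can fail to be exposed, and it is exactly to bridge this gap that Jenkinson's proof brings in genuine dynamical input (exploiting Birkhoff/ergodicity to force $\int\varphi\,\ud\mu$ up to the maximum while keeping every other invariant integral strictly below). That quantitative ergodic ingredient is what is missing from your argument.
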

{\begin{Remark}
The results in this subsection were stated for discrete systems, and one can extend such result to continuous systems by observing that each ergodic measure of a flow is  ergodic for  the time $t_0$-map of the flow (see~\cite{PuSh}) for some $t_0>0$ and an invariant measure of a flow is also invariant for  the time $t_0$-map of the flow. For example, we show how this works for Lemma~\ref{l.decomposition}. We take $t_0>0$ such that $\nu$ is also ergodic for the map $\phi_{t_0}$. Then one applies the result of Morris, there is $\lambda\in(0,1)$ and a $\phi_{t_0}$ invariant measure $\widehat\mu_0$ such that $\mu=\lambda\cdot\nu+(1-\lambda)\widehat\mu_0.$ Then it suffices to take 
$$\widehat\mu=\frac{1}{t_0}\int_0^{t_0}(\varphi_s)_*(\widehat\mu_0){\rm d}s.$$
\end{Remark}}
%
%

\subsection{The dual properties between measures and functions}
In this section, we give the proofs of our main results, and some ideas can be found in \cite{M1}.
\begin{Lemma}~\label{l.dense-function-with-unique-maximizing-measure}
Let $(\phi_t)_{t\in\RR}$ be a continuous flow on a compact metric space $K$. Let $\cU$ be a dense subset of $\cM_{erg}(\phi_t)$. Then the set 
	$$\cG:=\big\{h\in C^0(K): \textrm{$\cM_{max}(h)$ is unique and is contained in $\cU$} \big\}$$
	is dense in $C^0(K).$
\end{Lemma}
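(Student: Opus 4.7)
The plan is to perturb $\varphi$ in two stages: first arrange that some target ergodic measure $\nu\in\cU$ becomes a maximizing measure of the perturbed function, and then make it the \emph{unique} one. Fix $\varphi\in C^0(K)$ and $\epsilon>0$; I aim to produce $h\in\cG$ with $\|h-\varphi\|_{C^0}<\epsilon$. Since $\cM_{max}(\varphi)$ is nonempty, compact and convex, and its ergodic components lie in $\cM_{max}(\varphi)$ by item~\ref{i.ergodic-component} of Proposition~\ref{Pro:easy-property}, I first pick an ergodic $\mu_0\in\cM_{max}(\varphi)$. Fix $\delta\in(0,\min\{2,\epsilon/3\})$. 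Using the density of $\cU$ in $\cM_{erg}(\phi_t)$ and the weak$*$-continuity of $\sigma\mapsto\int\varphi\,\ud\sigma$, I choose $\nu\in\cU$ so close to $\mu_0$ that $\beta(\varphi)-\int\varphi\,\ud\nu\le(\epsilon/3)\cdot\delta$.

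Next I apply Lemma~\ref{l.perturbation-lemma} to $\varphi$ and $\nu$ with parameters $\epsilon/3$ and $\delta$. This yields $\widehat\varphi\in C^0(K)$ with $\|\varphi-\widehat\varphi\|_{C^0}\le\epsilon/3$ and some $\widehat\mu\in\cM_{max}(\widehat\varphi)$ satisfying $|\int f\,\ud\nu-\int f\,\ud\widehat\mu|\le\delta\|f\|_{C^0}$ for every $f\in C^0(K)$. If $\widehat\mu=\nu$, then $\nu\in\cM_{max}(\widehat\varphi)$ at once; otherwise Lemma~\ref{l.decomposition} applies (with $c=\delta<2$) to the invariant $\widehat\mu$ and the ergodic $\nu$, giving a decomposition $\widehat\mu=\lambda\nu+(1-\lambda)\mu'$ with $\lambda\in(0,1)$, so item~\ref{i.linear-comb} of Proposition~\ref{Pro:easy-property} yields $\nu\in\cM_{max}(\widehat\varphi)$ in either case.

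To upgrade the presence of $\nu$ to uniqueness, Lemma~\ref{l.measure-maximize-unique-function} supplies $\psi\in C^0(K)$ with $\cM_{max}(\psi)=\{\nu\}$, and I set $h:=\widehat\varphi+t\psi$ where $t>0$ is chosen so that $t\|\psi\|_{C^0}\le\epsilon/3$. For any $\mu\in\cM_{max}(h)$, the inequality $\int h\,\ud\mu\ge\int h\,\ud\nu$ rewrites as $t(\int\psi\,\ud\mu-\int\psi\,\ud\nu)\ge\int\widehat\varphi\,\ud\nu-\int\widehat\varphi\,\ud\mu$, whose right-hand side is nonnegative because $\nu\in\cM_{max}(\widehat\varphi)$; hence $\int\psi\,\ud\mu\ge\int\psi\,\ud\nu$, and the strict uniqueness of $\nu$ as the $\psi$-maximizer forces $\mu=\nu$. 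Consequently $\cM_{max}(h)=\{\nu\}\subset\cU$ and $\|h-\varphi\|_{C^0}\le 2\epsilon/3<\epsilon$, so $h\in\cG$, which proves the density claim.

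The main obstacle lies in the middle step: Lemma~\ref{l.perturbation-lemma} only produces \emph{some} maximizing measure $\widehat\mu$ metric-close to $\nu$, and not $\nu$ itself, whereas the definition of $\cG$ demands that the maximizing measure be a specific element of $\cU$. The role of Morris's decomposition (Lemma~\ref{l.decomposition}) is precisely to convert this metric proximity into the conclusion that $\nu$ is an ergodic component of $\widehat\mu$, so that ergodic-component stability of maximizing measures returns $\nu$ to $\cM_{max}(\widehat\varphi)$. Once this alignment is achieved, the tilt by $t\psi$ extracts uniqueness via a standard convex-combination argument.
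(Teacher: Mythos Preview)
Your proof is correct and follows essentially the same route as the paper: use the perturbation lemma to get a maximizing measure close to a chosen $\nu\in\cU$, invoke Morris's decomposition to force $\nu$ itself into the maximizing set, then tilt by a small multiple of a Jenkinson function to secure uniqueness. The only cosmetic differences are that you first pick an ergodic $\mu_0\in\cM_{max}(\varphi)$ and approximate it (rather than directly picking a near-maximizing $\nu$), and you track the constants a bit more carefully; otherwise the arguments coincide.
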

\proof
For any $h\in C^0(K),$ as $\cU$ is dense in $\cM_{erg}(\phi_t),$ for any $\e>0$,  there exists $\nu\in\cU$,  such that $\int h\ud\nu\geq \sup_{\mu\in\cM_{erg}}\int h\ud\mu-\e=\sup_{\mu\in\cM_{inv}}\int h\ud\mu-\e.$ 
 By Lemma~\ref{l.perturbation-lemma},  there exist  $h_\e\in C^0(K)$ and an invariant measure $\mu_\e\in\cM_{max}(h_\e)$ such that 
 \begin{itemize}
 	\item $\|h_\e-h\|_{C^0}\leq \e;$
 	\item $|\int\varphi\ud\nu-\int\varphi\ud\mu_\e|\leq \|\varphi\|_{C^0}$ for any $\varphi\in C^0(K)$.
 \end{itemize}  
 
 By Lemma~\ref{l.decomposition}, there exist  an invariant measure $\widehat{\mu_\varepsilon}$ and $\lambda\in(0,1)$ such that
 $$\mu_\varepsilon=\lambda\cdot\nu+(1-\lambda)\cdot{\widehat\mu}_\varepsilon.$$
 Thus $\nu$ is in the ergodic component of $\mu_\e$. By Item~\ref{i.ergodic-component} of Proposition~\ref{Pro:easy-property}, one has that $\nu\in\cM_{max}(h_\e)$. By Lemma~\ref{l.measure-maximize-unique-function}, there exists a continuous function $h^\prime$ such that $\cM_{max}(h^\prime)=\{\nu\}$. Consider the function $h_\e+\e\cdot \frac{h^\prime}{\|h^\prime\|_{C^0}}$ which is $2\e$-close to $h$. 
 \begin{Claim}
 $h_\e+\e\cdot \frac{h^\prime}{\|h^\prime\|_{C^0}}\in\cG.$
  \end{Claim}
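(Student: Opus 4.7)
The plan is to verify directly the two defining properties of $\cG$ for the function $g := h_\e + \e\cdot h'/\|h'\|_{C^0}$: that $\nu$ lies in $\cM_{max}(g)$, and that it is the \emph{only} maximizing measure. Membership of $\nu$ in $\cU$ has already been arranged at the construction stage.

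First I would establish existence. We know $\nu\in\cM_{max}(h_\e)$ (from the application of Lemma~\ref{l.decomposition} together with Item~\ref{i.ergodic-component} of Proposition~\ref{Pro:easy-property}) and $\nu\in\cM_{max}(h')$ (by the choice of $h'$ via Lemma~\ref{l.measure-maximize-unique-function}). Applying Item~\ref{i.linear-combination} of Proposition~\ref{Pro:easy-property} with coefficients $a=1$ and $b=\e/\|h'\|_{C^0}\in\RR^+$ gives immediately $\nu\in\cM_{max}(g)$.

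For uniqueness, the key observation is that for any invariant measure $\mu\in\cM_{inv}(\phi_t)$,
\[
\int g\ud\nu-\int g\ud\mu=\Bigl(\int h_\e\ud\nu-\int h_\e\ud\mu\Bigr)+\frac{\e}{\|h'\|_{C^0}}\Bigl(\int h'\ud\nu-\int h'\ud\mu\Bigr).
\]
The first summand is nonnegative because $\nu\in\cM_{max}(h_\e)$, and the second is nonnegative because $\nu\in\cM_{max}(h')$. If in addition $\mu\in\cM_{max}(g)$, the left-hand side vanishes, which forces both summands to be zero; in particular $\int h'\ud\mu=\int h'\ud\nu=\beta(h')$, so $\mu\in\cM_{max}(h')=\{\nu\}$. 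Hence $\mu=\nu$, proving $\cM_{max}(g)=\{\nu\}\subset\cU$, i.e.\ $g\in\cG$.

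There is no real obstacle here: the claim is a direct consequence of the strict discrimination provided by $h'$ combined with the linearity property of maximizing measures. The only point worth being a bit careful about is that the coefficient $\e/\|h'\|_{C^0}$ is strictly positive, so the second term genuinely contributes, and that the argument only uses invariance of $\mu$ (not ergodicity), so the conclusion applies to all of $\cM_{max}(g)$ as needed.
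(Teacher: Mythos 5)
Your proof is correct and follows essentially the same route as the paper: establish $\nu\in\cM_{max}(g)$ via Item~\ref{i.linear-combination} of Proposition~\ref{Pro:easy-property}, then exploit $\cM_{max}(h')=\{\nu\}$ to rule out any other maximizing measure. The paper phrases the uniqueness step as a proof by contradiction using the strict inequality $\int h'\ud\mu<\int h'\ud\nu$, while you make the same point directly by splitting $\int g\ud\nu-\int g\ud\mu$ into two nonnegative summands and forcing each to vanish; this is a slightly more explicit presentation of the identical argument.
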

 \begin{proof}[Proof of the Claim]
 By Item~\ref{i.linear-combination} of Proposition~\ref{Pro:easy-property} one has $\nu\in\cM_{max}(h_\e+\e\cdot \frac{h^\prime}{\|h^\prime\|_{C^0}})$. Assume by contradiction that there is another different invariant measure $\mu\in\cM_{max}(h_\e+\e\cdot \frac{h^\prime}{\|h^\prime\|_{C^0}})$. By the fact that $\cM_{max}(h^\prime)=\{\nu\}$, one has that
 $$\int \e\cdot \frac{h^\prime}{\|h^\prime\|_{C^0}}{\rm d}\mu=\frac{\varepsilon}{\|h'\|_{C^0}}\int h'{\rm d}\mu<\frac{\varepsilon}{\|h'\|_{C^0}}\int h'{\rm d}\nu.$$
 So we get a contradiction. By the fact $\nu\in\cU$, one gets that $h_\e+\e\cdot \frac{h^\prime}{\|h^\prime\|_{C^0}}\in\cG$ by definition.
 \end{proof} 
 One can conclude by the arbitrariness of $\varepsilon$.
 \endproof

\begin{Lemma}\label{l.open-dense-measure-give-open-dense-function}
Let $K$ be a compact metric space and $(\phi_t)_{t\in\RR}$ be a continuous flow on $K.$	Let $\cU$ be an open and dense subset of $\overline{\cM_{erg}(\phi_t)}$. Then the set 
$$\cF:=\big\{h\in C^0(K): \overline{\cM_{erg}(\phi_t)}\cap \cM_{max}(h)\subset\cU \big\}$$
is open and dense in $C^0(K).$
\end{Lemma}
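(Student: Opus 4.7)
The plan is to prove openness and density separately, in that order, since openness is a direct application of upper semi-continuity and density reduces to Lemma~\ref{l.dense-function-with-unique-maximizing-measure} after a short topological argument.

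For openness, I would argue by contradiction using Item~\ref{i.usc-maximizing} of Proposition~\ref{Pro:easy-property}. Suppose $h\in\cF$ but arbitrarily close to $h$ there are functions $h_n\to h$ with $h_n\notin\cF$, so that for each $n$ one can pick
$$\mu_n\in\overline{\cM_{erg}(\phi_t)}\cap\cM_{max}(h_n)\setminus\cU.$$
Since $\overline{\cM_{erg}(\phi_t)}\setminus\cU$ is closed (hence compact) in the weak$*$-topology, passing to a subsequence we may assume $\mu_n\to\mu\in\overline{\cM_{erg}(\phi_t)}\setminus\cU$. Upper semi-continuity of $\cM_{max}$ forces $\mu\in\cM_{max}(h)$, contradicting the assumption that $h\in\cF$. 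Thus $\cF$ is open.

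For density, the key preliminary step is to verify that $\cU\cap\cM_{erg}(\phi_t)$ is dense in $\cM_{erg}(\phi_t)$. Given $\mu\in\cM_{erg}(\phi_t)$ and a neighborhood $B$ of $\mu$, density of $\cU$ in $\overline{\cM_{erg}(\phi_t)}$ provides some $\nu_0\in\cU\cap B$. Since $\cU$ is open in $\overline{\cM_{erg}(\phi_t)}$, there is a small ambient open neighborhood $V\subset B$ of $\nu_0$ with $V\cap\overline{\cM_{erg}(\phi_t)}\subset\cU$; because $\nu_0\in\overline{\cM_{erg}(\phi_t)}$, the open set $V$ meets $\cM_{erg}(\phi_t)$, producing the desired point of $\cU\cap\cM_{erg}(\phi_t)$ close to $\mu$. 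Now apply Lemma~\ref{l.dense-function-with-unique-maximizing-measure} to the dense subset $\cU\cap\cM_{erg}(\phi_t)$ of $\cM_{erg}(\phi_t)$: the resulting set of continuous functions $h$ with $\cM_{max}(h)=\{\nu\}$ for some $\nu\in\cU$ is dense in $C^0(K)$, and any such $h$ satisfies $\overline{\cM_{erg}(\phi_t)}\cap\cM_{max}(h)\subset\{\nu\}\subset\cU$, so it lies in $\cF$.

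The only subtle point is the density step, where the naive idea—approximating points of $\overline{\cM_{erg}(\phi_t)}$ by points of $\cU$—would give non-ergodic approximants and thus not feed Lemma~\ref{l.dense-function-with-unique-maximizing-measure}. The fix is precisely to use the openness of $\cU$ inside $\overline{\cM_{erg}(\phi_t)}$ to produce a genuine ambient open set whose intersection with $\cM_{erg}(\phi_t)$ is automatically nonempty; once this is observed, the rest is a direct invocation of the results already established.
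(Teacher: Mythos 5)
Your argument is correct and follows essentially the same route as the paper: both deduce openness of $\cF$ from the upper semi-continuity of $h\mapsto\cM_{max}(h)$ together with compactness of $\overline{\cM_{erg}(\phi_t)}\setminus\cU$, and both obtain density by observing that $\cU\cap\cM_{erg}(\phi_t)$ is dense in $\cM_{erg}(\phi_t)$ and then invoking Lemma~\ref{l.dense-function-with-unique-maximizing-measure}. Your version simply spells out the (correct) topological argument for the density of $\cU\cap\cM_{erg}(\phi_t)$, which the paper states without proof, and phrases the openness step as a sequential contradiction rather than directly.
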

\proof
Let $\cU_{erg}=\cU\cap\cM_{erg}(\phi_t)$. Since $\cU$ is open and dense in $\overline{\cM_{erg}(\phi_t)}$,  the set  $\cU_{erg}$ is dense in $\cM_{erg}(\phi_t)$. By Lemma~\ref{l.dense-function-with-unique-maximizing-measure}, the set
$$\cG_{erg}:=\big\{h\in C^0(K): \textrm{$\cM_{max}(h)$ is unique and is contained in $\cU_{erg}$} \big\}$$
is dense in $C^0(K)$. 

As $\overline{\cM_{erg}(\phi_t)}$ is compact and $\cU$ is open, by Item~\ref{i.usc-maximizing} of Proposition~\ref{Pro:easy-property}, for any $h\in\cF$, there is $\delta_h>0$ such that for any $h'\in B(h,\delta_h)$, one has $\overline{\cM_{erg}(\phi_t)}\cap\cM_{max}(h')\subset \cU$. Thus $\cF$ is open. 
%
\endproof

\subsection{The proof of Theorem~\ref{thm.abstract-result}}

Now, we give the proof of Theorem~\ref{thm.abstract-result}. Note that for a periodic orbit $\gamma$, we use $\delta_\gamma$ to denote the ergodic measure supported on $\gamma$.
\proof[Proof of Theorem~\ref{thm.abstract-result}]We prove the theorem by items.
\paragraph{The proof of Item~\ref{i.uniqueness}.}
It suffices to prove that for a  generic function $\varphi\in C^0(K)$, one has that $\#\big(\cM_{max}(\varphi)\cap\overline{\cM_{erg}(\phi_t)}\big)=1$.  We define a map 
\begin{eqnarray*}
\cM: \  C^0(K) & \to & \cC_{\cM_{inv}(\phi_t)} \\
    \varphi  & \mapsto & \cM_{max}(\varphi)\cap\overline{\cM_{erg}(\phi_t)},
\end{eqnarray*}
where $\cC_{\cM_{inv}(\phi_t)}$ denotes the space of all compact subsets of $\cM_{inv}(\phi_t)$ endowed with Hausdorff distance.
 By Corollary~\ref{Cor:continuous-point}, there is a residual set $\cR_1$ in $C^0(K)$ such that $\cM$ is continuous at any point in $\cR_1.$

 Now,  we prove that any map $\varphi\in\cR_1$ has a unique maximizing measure. Assume, on the contrary,  that there is $\varphi_0\in\cR_1$ such that $\varphi_0$ has at least two maximizing measures. Thus, by Lemma~\ref{l.dense-function-with-unique-maximizing-measure}, there is a sequence of $\{\varphi_n\}$ such that $\lim_{n\to\infty}\varphi_n=\varphi_0$ and each $\varphi_n$ has a unique maximizing measure.
Since  the map $\cM$ is continuous at $\varphi_0$, one has that $\lim_{n\to\infty}\cM_{max}(\varphi_n)=\cM_{max}(\varphi_0)$. But $\#\cM_{max}(\varphi_0)\ge 2$, hence $\{\cM_{max}(\varphi_n)\}$ will not converge. This gives a contradiction.

\paragraph{The proof of Item~\ref{i.zero-etrnopy}.}We consider the map
\begin{eqnarray*}
\cH: \  \overline{\cM_{erg}(\phi_t)} & \to & \RR^+\cup\{0\} \\
    \mu  & \mapsto & h_{\mu}(\phi_t).
\end{eqnarray*}
By the property $\cS$, the map $\cH$ is upper-semi continuous. Thus, there is a residual set $\cC$ in $\overline{\cM_{erg}(\phi_t)}$ such that any point in $\cC$ is a continuous point of $\cH$. Thus, one can assume that there exists a sequence of open and dense subsets $\cU_n$ in $\overline{\cM_{erg}(\phi_t)}$ such that $\cC=\cap_{n\in\NN}\cU_n$. Let 
$$\cF_n:=\big\{h\in C^0(K): \textrm{$\overline{\cM_{erg}(\phi_t)} \cap\cM_{max}(h)$ is contained in $\cU_n$}\big\}.$$
By Lemma~\ref{l.open-dense-measure-give-open-dense-function}, one has that $\cF_n$ is open and dense in $C^0(K)$. Now we define 
$$\cR_2=\cR_1\cap(\bigcap_{n\in\NN}\cF_n).$$
We have already proved that any $\varphi\in\cR_2$ has a unique maximizing measure which is ergodic. Denote by $\{\mu\}=\cM_{max}(\varphi)$.
\begin{Claim}
$\mu$ is contained in $\cap_{n\in\NN}\cU_n.$
\end{Claim}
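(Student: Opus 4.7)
The plan is to verify the claim in two short steps: first show that $\mu$ is ergodic, so that $\mu \in \overline{\cM_{erg}(\phi_t)}$, and then unfold the definition of $\cF_n$ to force $\mu$ into every $\cU_n$. Since $\varphi \in \cR_2 \subset \cR_1$, we already know $\cM_{max}(\varphi) = \{\mu\}$, and since $\varphi \in \cR_2 \subset \bigcap_{n\in\NN} \cF_n$, we have a strong containment relation that, once $\mu$ is placed inside $\overline{\cM_{erg}(\phi_t)}$, delivers the conclusion immediately.

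For ergodicity of $\mu$, I would invoke Item~\ref{i.ergodic-component} of Proposition~\ref{Pro:easy-property}. If $\mu$ admitted a nontrivial ergodic decomposition, every ergodic component would itself belong to $\cM_{max}(\varphi)$; two distinct such components would contradict the uniqueness property provided by $\cR_1$. Consequently $\mu$ must be ergodic, and in particular $\mu \in \cM_{erg}(\phi_t) \subset \overline{\cM_{erg}(\phi_t)}$.

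For the second step, I would simply unfold the definitions. Fix $n \in \NN$. Since $\varphi \in \cF_n$, we have
$$\overline{\cM_{erg}(\phi_t)} \cap \cM_{max}(\varphi) \subset \cU_n.$$
Combining this with $\mu \in \overline{\cM_{erg}(\phi_t)}$ (from the previous step) and $\mu \in \cM_{max}(\varphi)$, we conclude $\mu \in \cU_n$. Since $n$ was arbitrary, $\mu \in \bigcap_{n\in\NN} \cU_n = \cC$, which is exactly the claim.

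There is no genuine obstacle here; the argument is a direct bookkeeping of the definitions of $\cR_2$, $\cF_n$, and $\cC$, together with the fact that uniqueness of a maximizing measure forces ergodicity via the ergodic decomposition. The substantive content of Item~\ref{i.zero-etrnopy} arises only in the step following the claim: once $\mu \in \cC$ is a continuity point of the entropy map $\cH$, one exploits property $\cP_e^+$ to approximate $\mu$ by periodic orbit measures $\delta_{\gamma_k}$ (which have zero entropy) whenever $h_\mu(\phi_t) > 0$, and continuity of $\cH$ at $\mu$ then forces $h_\mu(\phi_t) = \lim_k h_{\delta_{\gamma_k}}(\phi_t) = 0$, a contradiction giving $h_\mu(\phi_t) = 0$.
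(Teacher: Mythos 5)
Your proof is correct and follows essentially the same route as the paper's: the content of the claim is just to unwind the definition of $\cF_n$ once one knows $\mu\in\overline{\cM_{erg}(\phi_t)}\cap\cM_{max}(\varphi)$. The only difference is cosmetic: the paper asserts before the claim that the unique maximizing measure for $\varphi\in\cR_2\subset\cR_1$ is ergodic (which was set up in the proof of Item~\ref{i.uniqueness}), and then cites $\{\mu\}=\cM_{max}(\varphi)$ and $\varphi\in\cF_n$ directly, whereas you re-derive ergodicity explicitly via Item~\ref{i.ergodic-component} of Proposition~\ref{Pro:easy-property} before unfolding $\cF_n$; your extra step is sound and actually makes explicit the hypothesis $\mu\in\overline{\cM_{erg}(\phi_t)}$ that the paper's one-line proof of the claim tacitly uses.
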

\begin{proof}
For proving $\mu\in \cap_{n\in\NN}\cU_n$, it suffices  to prove that $\mu\in\cU_n$ for any $n\in\NN$. Since $\{\mu\}=\cM_{max}(\varphi)$ and $\varphi\in\cF_n$, by the definition of $\cF_n$, one has that $\mu\in\cU_n$.
\end{proof}
Now we show that $\mu$ has zero metric entropy. Otherwise, by the property $\cP_e^+$, $\mu$ is accumulated by periodic ergodic measures $\{\delta_{\gamma_n}\}$ for some periodic orbits $\{\gamma_n\}$. Since any element in $\cap_{n\in\NN}\cU_n$ is the continuous point of the map $\nu\mapsto h_\nu(\phi_t)$, one has that
$$h_\mu(\phi_t)=\lim_{n\to\infty}h_{\delta_{\gamma_n}}(\phi_t)=0.$$
Thus one gets a contradiction.
\paragraph{The proof of Item~\ref{i.full-support}.}We consider the map
\begin{eqnarray*}
\cT: \  \overline{\cM_{erg}(\phi_t)} & \to & \cC_K \\
    \mu  & \mapsto & {\rm supp}(\mu),
\end{eqnarray*}
where $\cC_K$ denotes the space of compact subsets of $K$ endowed with Hausdorff topology.
The map $\cT$ is lower semi-continuous. Hence there is a residual set $\cR_\cT$ in $\overline{\cM_{erg}(\phi_t)}$ such that any point in $\cR_\cT$ is a continuous point of $\cT$. Thus, one can assume that there exists a sequence of open and dense subsets $\cU'_n$ in $\overline{\cM_{erg}(\phi_t)}$ such that $\cT=\cap_{n\in\NN}\cU'_n$. Let 
$$\cF'_n:=\big\{h\in C^0(K): \textrm{$\cM_{max}(h)$ is contained in $\cU'_n$}\big\}.$$
By Lemma~\ref{l.open-dense-measure-give-open-dense-function}, one has that $\cF'_n$ is open and dense in $C^0(K)$.  Now we define 
$$\cR_3=\cR_1\cap(\bigcap_{n\in\NN}\cF'_n)$$
We have already proved that any $\varphi\in\cR_1$ has a unique maximizing measure which is ergodic. Denote by $\{\mu\}=\cM_{max}(\varphi)$.
\begin{Claim}
$\mu$ is contained in $\cap_{n\in\NN}\cU_n'$.
\end{Claim}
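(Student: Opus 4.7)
The plan is to mimic the analogous claim that appeared just above in the proof of Item~\ref{i.zero-etrnopy}, now with $\cF'_n$, $\cU'_n$ and the support map $\cT$ playing the role of $\cF_n$, $\cU_n$ and the entropy map $\cH$. Since $\bigcap_{n\in\NN}\cU_n'$ is a countable intersection, it is enough to prove $\mu\in\cU_n'$ for each individual $n\in\NN$.

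Fix $n\in\NN$. By construction $\cR_3=\cR_1\cap\bigl(\bigcap_{k\in\NN}\cF'_k\bigr)$, so in particular $\varphi\in\cF'_n$. The defining property of $\cF'_n$ immediately gives $\cM_{max}(\varphi)\subset \cU_n'$; combined with the uniqueness $\cM_{max}(\varphi)=\{\mu\}$, which is supplied by $\varphi\in\cR_1$ via Item~\ref{i.uniqueness}, this forces $\mu\in\cU_n'$. Letting $n$ range over $\NN$ completes the claim.

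No obstacle is expected inside this one-step verification; the genuine content has already been absorbed into Lemma~\ref{l.open-dense-measure-give-open-dense-function}, which delivers the openness and density of each $\cF'_n$. The substantive step will come directly after the claim, where one must upgrade $\mu\in\bigcap_n \cU_n'$, i.e.\ the assertion that $\mu$ is a continuity point of the lower semi-continuous support map $\cT$, into $\mathrm{supp}(\mu)=K$. The idea there is to invoke property $\cF_s$: the dense $G_\delta$ set $\cM_R\subset \overline{\cM_{erg}(\phi_t)}$ it provides meets every weak$*$-neighbourhood of $\mu$, so one can select a sequence $\nu_k\to\mu$ with $\mathrm{supp}(\nu_k)=K$, and continuity of $\cT$ at $\mu$ then yields $\mathrm{supp}(\mu)=\lim_k \mathrm{supp}(\nu_k)=K$ in Hausdorff distance. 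That final deduction, rather than the claim itself, is the main technical pivot of Item~\ref{i.full-support}.
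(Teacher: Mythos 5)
Your argument matches the paper's own proof of this claim: reduce to showing $\mu\in\cU_n'$ for each fixed $n$, note $\varphi\in\cF'_n$ by construction of $\cR_3$, apply the definition of $\cF'_n$, and use uniqueness $\cM_{max}(\varphi)=\{\mu\}$ established earlier via Item~\ref{i.uniqueness}. The additional commentary on how the subsequent step of Item~\ref{i.full-support} is meant to proceed is accurate but not part of this claim.
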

\begin{proof}
For proving $\mu\in \cap_{n\in\NN}\cU'_n$, we only need  to prove that $\mu\in\cU'_n$ for any $n\in\NN$. Since $\{\mu\}=\cM_{max}(\varphi)$ and $\varphi\in\cF'_n$, by the definition of $\cF'_n$, one has that $\mu\in\cU'_n$.
\end{proof}
Now we show that $\mu$ has full support.  Since generic measures in $\overline{\cM_{erg}(\phi_t)}$ have  full support,  
there is a sequence of invariant measures $\{\mu_n\}\subset \overline{\cM_{erg}(\phi_t)}$ such that $\mu=\lim_{n\to\infty}\mu_n$ and each $\mu_n$ has full support. Since $\mu\in\cT$, one has that
$${\rm supp}(\mu)=\lim_{n\to\infty}{\rm supp}(\mu_n)=K.$$
\endproof
%
%
%
%
%
%
%
%
%

\section{Generic measures of generic systems}

We will prove the following theorem in this section.

\begin{Theorem}\label{Thm:generic-measure}
There is a dense $G_\delta$ set $\cG$ in $\cX^1(M)$ such that for any $X\in\cG$, if $\Lambda$ is a non-trivial isolated transitive set, then
\begin{enumerate}
\item periodic measures are dense in $\cM_{inv}(\Lambda)$, in particular, $\overline{\cM_{erg}(\Lambda)}=\cM_{inv}(\Lambda)$;
\item there is a dense $G_\delta$ set $R_\Lambda$ in $\cM_{inv}(\Lambda)$ such that  ${\rm supp}(\mu)=\Lambda$ for any $\mu\in R_\Lambda$.
\end{enumerate}
\end{Theorem}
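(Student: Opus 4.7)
The plan is to build $\cG$ as a countable intersection of well-studied $C^1$-generic subsets of $\cX^1(M)$: the Kupka-Smale vector fields, the Pugh general density set ($\overline{\Per(X)}=\Omega(X)$), the flow version of Ma\~n\'e's ergodic closing lemma (Wen-Xia), and the Crovisier connecting lemma for pseudo-orbits for flows. Each of these is known to be residual in $\cX^1(M)$, so their intersection $\cG$ is residual. Fix $X\in\cG$ and a non-trivial isolated transitive set $\Lambda$ with isolating neighbourhood $U$, so that $\Lambda=\bigcap_{t\in\RR}\phi_t(U)$.

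For Item (1), I first approximate ergodic measures: given $\mu\in\cM_{erg}(\Lambda)$, the ergodic closing lemma produces periodic orbits $\gamma_n$ of $X$ such that $\delta_{\gamma_n}\to\mu$ in the weak-$*$ topology and Hausdorff-shadowing ${\rm supp}(\mu)\subset\Lambda$; because $\Lambda$ is isolated, for $n$ large the invariant set $\gamma_n$ lies in $U$ and hence in $\Lambda$. By the ergodic decomposition this already forces finite convex combinations of periodic measures in $\Lambda$ to be dense in $\cM_{inv}(\Lambda)$; the crucial remaining step is to replace such a combination $\sum_{i=1}^k\lambda_i\delta_{\gamma_i}$ by a single periodic measure. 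Here the pseudo-orbit connecting lemma in $\cG$ does the work: since $\Lambda$ is a chain-recurrence class of $X$, long pieces of the orbits $\gamma_1,\dots,\gamma_k$ can be chain-concatenated into a pseudo-orbit spending a fraction $\approx\lambda_i$ of its time near $\gamma_i$, then closed up (after a small $C^1$-free use of the connecting lemma that the $\cG$-genericity absorbs) into a genuine periodic orbit $\gamma\subset\Lambda$ whose measure $\delta_\gamma$ is weak-$*$ close to $\sum_i\lambda_i\delta_{\gamma_i}$. This gives density of periodic measures in $\cM_{inv}(\Lambda)$, and since periodic measures are ergodic this also gives $\overline{\cM_{erg}(\Lambda)}=\cM_{inv}(\Lambda)$.

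For Item (2), Item (1) plus separability of $\cM_{inv}(\Lambda)$ let us extract a countable family of periodic orbits $\{\gamma_n\}\subset\Lambda$ whose union is dense in $\Lambda$ (a dense family of ergodic measures $\{\mu_k\}\subset\cM_{erg}(\Lambda)$ has $\bigcup_k{\rm supp}(\mu_k)$ dense in $\Lambda$, and Item (1) lets each $\mu_k$ be approximated by periodic measures whose supports lie arbitrarily Hausdorff-close to ${\rm supp}(\mu_k)$). Set $\mu_\ast:=\sum_{n\geq 1}2^{-n}\delta_{\gamma_n}\in\cM_{inv}(\Lambda)$; then ${\rm supp}(\mu_\ast)=\Lambda$. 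Fixing a countable basis $\{U_n\}$ of open subsets of $\Lambda$, define
$$\cO_n:=\{\mu\in\cM_{inv}(\Lambda):\mu(U_n)>0\}.$$
By the Portmanteau lower semi-continuity of $\mu\mapsto\mu(U_n)$, each $\cO_n$ is open; and $\cO_n$ is dense because for any $\mu\in\cM_{inv}(\Lambda)$ and $\e>0$ the mixture $(1-\e)\mu+\e\mu_\ast$ lies in $\cO_n$ (since $\mu_\ast(U_n)>0$) and is $\e$-close to $\mu$. Hence $R_\Lambda:=\bigcap_n\cO_n$ is a dense $G_\delta$, and by construction $\mu\in R_\Lambda$ if and only if ${\rm supp}(\mu)=\Lambda$.

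The main obstacle is the closing-and-gluing step that upgrades density of periodic measures from $\cM_{erg}(\Lambda)$ to $\cM_{inv}(\Lambda)$ when $\Lambda$ may contain singularities: Ma\~n\'e's and Crovisier's lemmas were originally proved for diffeomorphisms, and their flow counterparts require the Wen-Xia time-rescaling near singular points to ensure that the shadowing periodic orbit we produce remains inside $\Lambda$. This is precisely the delicate phenomenon flagged in Conjecture~\ref{con:with-singularity}; in the non-singular setting the argument follows the Abdenur-Bonatti-Crovisier scheme essentially verbatim, and in the singular case the absorption into the $C^1$-residual $\cG$ of both the ergodic closing lemma and the pseudo-orbit connecting lemma is what makes the scheme go through.
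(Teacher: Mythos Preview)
Your argument for Item~(2) is correct and in fact more elementary than the paper's: once one has a fully supported measure $\mu_\ast$ (and periodic orbits dense in $\Lambda$ follow already from Pugh's general density theorem plus isolation, so your route to $\mu_\ast$ can be shortened), the Portmanteau/convex-combination trick gives the residual set $R_\Lambda$ directly. The paper instead uses continuity points of $\mu\mapsto{\rm supp}(\mu)$ together with a lemma producing, for each periodic orbit $\gamma_n$, a homoclinically related $\gamma_n'$ with $\gamma_n'\to\Lambda$ in Hausdorff distance and $\delta_{\gamma_n'}$ close to $\delta_{\gamma_n}$; your approach avoids this machinery.

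However, Item~(1) has a genuine gap. You apply the ergodic closing lemma to an arbitrary $\mu\in\cM_{erg}(\Lambda)$, but Wen's flow version of Ma\~n\'e's lemma only applies to \emph{non-singular} ergodic measures. When $\sigma\in\Lambda\cap\Sing(X)$, the Dirac measure $\delta_\sigma$ is ergodic and the closing lemma says nothing about it; yet $\delta_\sigma$ must be approximated by periodic measures for the scheme to work. The paper treats this case separately: by the $C^1$ connecting lemma one perturbs $X$ so that $\sigma$ acquires a homoclinic orbit, then a further small perturbation breaks this into a periodic orbit whose invariant measure is close to $\delta_\sigma$ (since the unique invariant measure on the homoclinic loop is $\delta_\sigma$); a standard genericity argument then yields, for $X\in\cG$, periodic orbits $\gamma_n$ of $X$ itself with $\delta_{\gamma_n}\to\delta_\sigma$. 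Your final paragraph misidentifies the difficulty as lying in the gluing step; the paper explicitly notes that passing from convex combinations of periodic measures to a single periodic measure ``only concerns the regular part of the dynamics and has nothing to do with singularities.'' The singular issue is entirely in the ergodic approximation step, and your proposal does not supply the missing argument for $\delta_\sigma$.
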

Theorem~\ref{Thm:generic-measure} is the (non-trivial) flow version of Item (a) and Item (b) of Theorem 3.5 in \cite{ABC}. However, here we have to consider the singular measures in $\Lambda$ which can also be accumulated by periodic measures by using the $C^1$ connecting lemma by \cite{WX} and \cite{BC}. Moreover, in the proof of \cite[Theorem 3.5]{ABC}, they can prove that any ergodic measure can be accumulated by a sequence of periodic measures such that its support is also accumulated \cite[Theorem 4.1]{ABC}. In our case, the ergodic measure supported on a singularity can never be accumulated by the support.

\medskip

Recall that given a vector field $X$, a point $\sigma$ is said to be a \emph{singularity} of $X$ if $X(\sigma)=0$ and we denote by $\sing(X)$ the set of singularities of $X$. The Dirac measure supported on a singularity $\sigma$ is denoted by $\delta_\sigma$.

\subsection{Connecting lemma and its consequence}
Recall that for a vector field $X$, a transitive compact invariant set $\Lambda$ is said to be \emph{non-trivial} if it is not reduced to  a singularity or a periodic orbit.
\begin{Proposition}\label{Pro:singular-measure-perturbation}

Let $\Lambda$ be a non-trivial transitive set of a $C^1$ vector field $X$ and $\sigma\in\Lambda$ be a hyperbolic singularity. Then for any $\varepsilon>0$, there are a vector field $Y$ and a periodic orbit $\gamma$ of $Y$ such that
\begin{itemize}
\item $Y$ is $\varepsilon$ $C^1$-close to $X$,

\item the periodic measure $\delta_\gamma$ is $\varepsilon$ close to the singular measure $\delta_\sigma$.

\end{itemize}

\end{Proposition}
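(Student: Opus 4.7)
The plan is to use the $C^1$ connecting lemma to manufacture, via a small $C^1$-perturbation of $X$, a periodic orbit of the perturbed vector field that enters an arbitrarily small neighborhood of $\sigma$, and then to exploit hyperbolicity of the singularity (the vector field vanishes linearly near $\sigma$) to conclude that its invariant measure concentrates at $\sigma$.

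First I would set up a long closed pseudo-orbit through $\sigma$. Since $\Lambda$ is transitive and non-trivial, it is chain transitive and there is a point $p\in\Lambda\setminus\{\sigma\}$ whose forward orbit enters every neighborhood of $\sigma$ and whose backward orbit also accumulates on $\sigma$. Fix any small $\delta>0$ (eventually this will go to $0$). Using the local stable/unstable sets of the hyperbolic singularity $\sigma$, together with a sufficiently long segment of the orbit of $p$, one builds a finite $\eta$-pseudo-orbit (for $\eta$ much smaller than $\delta$) that leaves $\sigma$ along $W^u_{\mathrm{loc}}(\sigma)$, wanders through $\Lambda$ beyond some fixed neighborhood of $\sigma$, and returns to a point in $B_\delta(\sigma)\cap W^s_{\mathrm{loc}}(\sigma)$, thus closing up on $\sigma$.

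Next I would invoke the $C^1$ connecting lemma for flows of Wen--Xia \cite{WX}, combined with the pseudo-orbit version of Bonatti--Crovisier \cite{BC}, to turn this pseudo-loop into a genuine closed orbit of a vector field $Y$ that is $\varepsilon$-close to $X$ in the $C^1$ topology. The perturbation is supported in finitely many small flow boxes placed \emph{away} from a fixed neighborhood of $\sigma$, so that $\sigma$ persists as a hyperbolic singularity of $Y$ and the local stable/unstable manifolds of $\sigma$ are essentially unchanged. By choosing the supports on transverse sections on the unstable and stable sides of $\sigma$ and connecting those sections, one obtains a genuine periodic orbit $\gamma$ of $Y$ (rather than just a homoclinic connection to $\sigma$), and one arranges that $\gamma$ intersects $B_\delta(\sigma)$.

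Finally I would estimate $\mathrm{d}(\delta_\gamma,\delta_\sigma)$. Since $\sigma$ is hyperbolic for $Y$, there is a constant $C$ with $\|Y(x)\|\le C\cdot\mathrm{dist}(x,\sigma)$ in a fixed neighborhood $V$ of $\sigma$; thus an orbit of $Y$ that enters $B_\delta(\sigma)$ and exits $V$ spends time $\gtrsim |\log\delta|/C$ inside $V$, while the portion of $\gamma$ outside $V$ is an arc of bounded length traversed at bounded speed and hence takes bounded time. Therefore the fraction of the period of $\gamma$ spent in $V$ tends to $1$ as $\delta\to 0$, which forces $\delta_\gamma\to\delta_\sigma$ weakly$*$; choosing $\delta$ small enough gives $\mathrm{d}(\delta_\gamma,\delta_\sigma)<\varepsilon$.

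The main obstacle is the second step: one must apply the connecting lemma in such a way that (i) $\sigma$ and its hyperbolicity are preserved, (ii) the produced real orbit is genuinely periodic rather than a homoclinic loop of $\sigma$, and (iii) the loop passes through the prescribed small ball around $\sigma$. This requires a careful choice of the pseudo-orbit (pushing its endpoints off $W^s(\sigma)$ and $W^u(\sigma)$) and of the perturbation supports on the two transverse cross-sections near $\sigma$, exactly the kind of bookkeeping carried out in \cite{WX,BC}.
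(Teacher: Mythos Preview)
Your route differs from the paper's, and the crucial step~3 has a real gap. The paper does \emph{not} try to manufacture a periodic orbit directly. Instead it first applies the Wen--Xia connecting lemma at two regular points $p^s\in W^s(\sigma)\setminus\{\sigma\}$ and $p^u\in W^u(\sigma)\setminus\{\sigma\}$ (connecting both to a point $p$ with dense orbit in $\Lambda$) to produce, for some $Y$ that is $\varepsilon/2$-close to $X$, a \emph{homoclinic} orbit of the persisting hyperbolic singularity $\sigma_Y$. It then unfolds this homoclinic connection by a further arbitrarily small perturbation into a sequence of vector fields $Y_n\to Y$ with periodic orbits $\gamma_n$ whose Hausdorff limit is the closure of the homoclinic orbit. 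Since the closure of a homoclinic orbit of a hyperbolic singularity carries a \emph{unique} invariant probability measure, namely $\delta_\sigma$, one gets $\delta_{\gamma_n}\to\delta_\sigma$ immediately from compactness of the space of invariant measures---no time estimate is needed. In particular, the paper embraces the homoclinic loop you try to avoid; your ``main obstacle'' (ii) is not an obstacle but the mechanism.

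The gap in your argument is the assertion that ``the portion of $\gamma$ outside $V$ is an arc of bounded length traversed at bounded speed and hence takes bounded time.'' Neither the Wen--Xia nor the Bonatti--Crovisier connecting lemma gives any control on the period of the closed orbit it produces: the conclusion is only that $\phi_T^Y(x)=y$ for \emph{some} $T>0$, with no bound on $T$. As $\delta\to 0$ you must either rebuild the pseudo-orbit (and re-apply the lemma, losing all control of $T$) or explain why a single fixed excursion outside $V$, together with a single application of the connecting lemma, already forces the resulting genuine periodic orbit through $B_\delta(\sigma)$ for every small $\delta$. Nothing in the proposal establishes this, so the ratio $(\text{time in }V)/(\text{period of }\gamma)$ is not shown to tend to $1$. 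The paper's two-step route sidesteps this entirely: once periodic orbits accumulate in the Hausdorff topology on a compact invariant set whose only invariant measure is $\delta_\sigma$, weak$*$ convergence of $\delta_{\gamma_n}$ to $\delta_\sigma$ is automatic.
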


\begin{Remark}
In general, we will not have that the support of $\delta_\gamma$ is close to the support of $\delta_\sigma$.
\end{Remark}

A direct corollary of Proposition~\ref{Pro:singular-measure-perturbation} is the following.
\begin{Corollary}\label{Cor:generic-accumulation}
There is a dense $G_\delta$ set $\cG\subset \cX^1(M)$ such that for any $X\in\cG$,  if a  singularity $\sigma$ of $X\in\cG$ is contained in a non-trivial  transitive set of $X$, then there is a sequence of periodic orbits $\gamma_n$ such that 
$$\delta_\sigma=\lim_{n\to\infty}\delta_{\gamma_n}.$$
\end{Corollary}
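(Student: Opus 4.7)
The plan is to upgrade the pointwise perturbation statement of Proposition~\ref{Pro:singular-measure-perturbation} to a $C^1$-generic statement via a lower semi-continuity argument for the set-valued map sending a vector field to the closure of its set of hyperbolic periodic Dirac measures.

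Let $\cP(M)$ denote the compact metrizable space of Borel probability measures on $M$ (with the weak-$\ast$ topology), and let $\mathrm{Comp}(\cP(M))$ denote the space of non-empty compact subsets of $\cP(M)$ equipped with the Hausdorff distance $d_H$. Define
$$F:\cX^1(M)\longrightarrow \mathrm{Comp}(\cP(M)),\qquad F(X):=\overline{\{\delta_\gamma:\gamma\text{ is a hyperbolic periodic orbit of }X\}}.$$
Persistence of hyperbolic periodic orbits under small $C^1$-perturbations shows that $F$ is lower semi-continuous on $\cX^1(M)$: if $\mu\in F(X)\cap U$ for some open $U\subset\cP(M)$, then some hyperbolic periodic orbit $\gamma$ of $X$ satisfies $\delta_\gamma\in U$, and the unique continuation of $\gamma$ has Dirac measure in $U$ for every $Y$ in a $C^1$-neighborhood of $X$.

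By the classical Fort-type theorem---a lower semi-continuous set-valued map with compact values from a Baire space into a compact metric space is Hausdorff-continuous on a residual $G_\delta$ subset---there is a residual subset $\cG_0\subset\cX^1(M)$ on which $F$ is Hausdorff-continuous. Let $\cK$ denote the Kupka-Smale residual set and set $\cG:=\cG_0\cap\cK$; this is still residual in $\cX^1(M)$.

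Fix $X\in\cG$ and a singularity $\sigma$ of $X$ contained in some non-trivial transitive set. Since $X$ is Kupka-Smale, $\sigma$ is hyperbolic and Proposition~\ref{Pro:singular-measure-perturbation} applies. Given $\eta>0$, Hausdorff-continuity of $F$ at $X$ provides $\delta>0$ such that $d_{C^1}(X,Y)<\delta$ implies $d_H(F(X),F(Y))<\eta/2$. Applying the Proposition with $\varepsilon=\min(\delta,\eta/2)$, followed by a further small Kupka-Smale perturbation to make the new periodic orbit hyperbolic without destroying the measure approximation, produces $Y_\eta$ within $C^1$-distance $\delta$ of $X$ and a hyperbolic periodic orbit $\gamma_\eta$ of $Y_\eta$ with $d(\delta_{\gamma_\eta},\delta_\sigma)<\eta/2$. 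Then $\delta_{\gamma_\eta}\in F(Y_\eta)$, so Hausdorff-closeness yields $\mu_\eta\in F(X)$ with $d(\mu_\eta,\delta_\sigma)<\eta$. Since $F(X)$ is closed, letting $\eta\to 0$ gives $\delta_\sigma\in F(X)$: a sequence of periodic orbits of $X$ itself has Dirac measures converging to $\delta_\sigma$. The only nontrivial technical input beyond routine persistence of hyperbolic invariant sets is the Fort-type residual continuity theorem for lower semi-continuous set-valued maps.
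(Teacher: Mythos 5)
Your argument is correct and is the standard Baire--category route from the pointwise perturbation statement (Proposition~\ref{Pro:singular-measure-perturbation}) to the generic statement: lower semi-continuity of the compact-valued map $X\mapsto\overline{\{\delta_\gamma\}}$, Fort's theorem to get a residual set of Hausdorff-continuity points, and intersection with the Kupka--Smale residual set to guarantee that every singularity is hyperbolic so the Proposition actually applies. The only cosmetic slip is the choice $\varepsilon=\min(\delta,\eta/2)$, which leaves no room for the extra perturbation used to make the new periodic orbit hyperbolic while staying within $\delta$ of $X$; taking $\varepsilon=\min(\delta/2,\eta/4)$ (or any strict fraction) fixes this, so there is no real gap.
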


The proof of Proposition~\ref{Pro:singular-measure-perturbation} is based on the \emph{$C^1$ connecting lemma} original from Hayashi \cite{H}. We use the following version from Wen-Xia \cite{WX}.
\begin{Lemma}\label{Lem:connecting}
Let  $X$ be  a $C^1$ vector field. Given any point $z$ which is neither a singularity nor a periodic point, for any neighborhood $\cU$ of $X$, there is $L>0$ such that for any neighborhood $U_z$ of $z$, there is a neighborhood $V_z\subset U_z$ of $z$ with the following properties.

For any points $x$ and $y$, if
\begin{itemize}
\item $x,y\notin \phi_{[0,L]}(U_z)$;
\item the forward orbit of $x$ enters $V_z$ and the backward orbit of $y$ enters $V_z$;

\end{itemize}
then there exist a vector field $Y\in\cU$ and a time $T>0$ such that
\begin{itemize}
\item $\phi_T^Y(x)=y$,
\item $Y(p)=X(p)$ for any $p\in M\setminus \phi_{[0,L]}(U_z)$.
\end{itemize}
\end{Lemma}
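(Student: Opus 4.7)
The plan is to adapt Hayashi's original connecting lemma argument to the flow setting, following the approach of Wen and Xia \cite{WX}. The key idea is to build a \emph{tower} of nested cross-sections transverse to the orbit segment $\phi_{[0,L]}(z)$, and then to use a pigeonhole-type selection lemma to locate a single cell of the tower inside which the perturbation rerouting the forward orbit of $x$ onto the backward orbit of $y$ is $C^1$-small.

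First, I would fix the $C^1$-neighborhood $\cU$ of $X$ and a smooth local cross-section $\Sigma\subset M$ of codimension one, transverse to $X$ at $z$. Because $z$ is neither a singularity nor a periodic point, the flow is locally injective along the orbit of $z$, and $\phi_{[-\eta,\eta]}(\Sigma)$ forms a genuine flow-box for some small $\eta>0$. The number $L$ announced by the lemma is determined by $\cU$: it will be taken of the form $L = N\eta$, where $N$ is the combinatorial integer supplied by the selection lemma below, and $N$ depends only on $\cU$ (and on the local shape of the flow in a fixed chart). Crucially, $L$ must be chosen \emph{before} $U_z$ is revealed, which explains the two-step quantifier structure in the statement.

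Given $U_z$, I would build a nested family of flow-tubes $U_z = W_0 \supset W_1 \supset \cdots \supset W_N$, with each $W_i$ a tubular neighborhood of $z$ shrinking at a prescribed geometric rate, and set $V_z := W_N$. Under the hypothesis, the forward orbit of $x$ and the backward orbit of $y$ both eventually hit $V_z$, so in particular they traverse every shell $W_{i-1}\setminus W_i$. A Liao-type selection lemma then supplies an index $i\in\{1,\ldots,N\}$ and two passages — a forward iterate $x'$ of $x$ and a backward iterate $y'$ of $y$ — which land in a common cross-section slice at a Euclidean distance that is a controlled fraction of the cell-size at floor $i$. Inside the corresponding cell of the flow-box tower, I would insert a compactly supported $C^1$-small perturbation of $X$, constructed via a bump function in straightening coordinates, whose effect on the flow is precisely to route $\phi_s^Y(x')$ onto $y'$ after some short time $s$. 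The support lies in $\phi_{[0,L]}(U_z)$ by construction; outside, $Y$ agrees with $X$; concatenating with the unperturbed orbit segments from $x$ to $x'$ and from $y'$ to $y$ yields a time $T>0$ with $\phi_T^Y(x)=y$.

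The main obstacle is the selection step. A naive one-cell connection between nearby points has $C^1$-cost scaling like $\text{(displacement)}/\text{(cell size)}$, which is not automatically small; the tower must be long enough that, among its $N$ shells, pigeonhole guarantees a pair of passages whose displacement-to-cell-size ratio is tiny — this is the heart of Liao's selecting lemma in the flow version, and it is what forces $L$ to be large relative to $\cU$. A secondary delicate point is the verification that the perturbation support in the chosen cell meets neither the excluded orbit segments of $x$ and $y$ nor the other cells of the tower; this uses the hypothesis $x,y \notin \phi_{[0,L]}(U_z)$, which was built into the statement precisely to isolate the perturbation region, and it relies on the local injectivity of the flow to preclude self-intersections of the tower.
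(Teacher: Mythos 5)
This lemma is cited from \cite{WX}, not proved in the paper (the text says ``We use the following version from Wen-Xia \cite{WX}''), so there is no in-paper proof to compare against; I will assess your sketch on its own terms. You correctly identify the overall architecture of the Hayashi/Wen-Xia $C^1$ connecting lemma: a tower of cross-sections stacked along $\phi_{[0,L]}(z)$, with $L$ determined by $\cU$ alone and fixed before $U_z$ is revealed (this quantifier order is exactly what makes the lemma ``uniform''), a small $V_z$ chosen afterward inside $U_z$, and a compactly supported bump perturbation in straightening coordinates whose support sits inside $\phi_{[0,L]}(U_z)$ and therefore avoids $x$ and $y$ by hypothesis.

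The gap is in your selection step. You propose to pigeonhole over the $N$ concentric shells $W_{i-1}\setminus W_i$ to find one shell and one pair of passages $(x',y')$ whose separation is a ``controlled fraction of the cell-size at floor $i$,'' and then connect $x'$ to $y'$ with a single push inside that cell. But the basic perturbation lemma only lets an $\e$-$C^1$-small bump supported in a box of side $s$ move a point by roughly $\e\cdot s$; you need the ratio displacement$/$cell-size to be at most of order $\e$, not merely $O(1)$. Since the shells and the typical passage separations shrink at the same geometric rate, moving from one shell to the next does not improve that ratio, and pigeonhole among the $N$ shells gives no gain — each shell presents essentially the same ratio up to constants. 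The actual selecting lemma of Wen-Xia (building on Hayashi) is a genuinely combinatorial device: it produces a finite \emph{chain} of intermediate points, drawn alternately from the forward orbit of $x$ and the backward orbit of $y$, together with a family of pairwise disjoint tiles in the tower, one tile per link, such that each individual jump satisfies the ratio bound $\lesssim\e$ and the supports of the corresponding bumps do not overlap. The integer $N$ (hence $L$) is forced by the length of this chain and the disjointness bookkeeping, not by a one-level pigeonhole. Without the chaining mechanism, the single jump you want to perform is not $C^1$-small and the argument does not close.
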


\begin{Lemma}\label{Lem:homoclinic-orbit}

Let $\Lambda$ be a non-trivial transitive set of a vector field $X$. Let $\sigma\in\Lambda$ be a hyperbolic singularity. Then for any $\varepsilon>0$, there are a vector field $Y$ and a periodic orbit $\gamma$ of $Y$ such that
\begin{itemize}
\item $Y$ is $\varepsilon$ $C^1$-close to $X$,

\item $\sigma_Y$ has a homoclinic orbit.

\end{itemize}

\end{Lemma}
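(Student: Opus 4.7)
The plan is to apply the $C^1$-connecting lemma (Lemma~\ref{Lem:connecting}) to link $W^u(\sigma)$ to $W^s(\sigma)$ via a perturbation whose support avoids a fixed neighborhood of $\sigma$, thereby producing a homoclinic orbit of $\sigma$ for the perturbed vector field. Since $\Lambda$ is non-trivial, first fix $z\in \Lambda$ which is neither a singularity nor periodic, and let $\cU$ be the $\varepsilon$-$C^1$-ball around $X$. Applying Lemma~\ref{Lem:connecting} at $z$ yields $L>0$, and for each neighborhood $U_z$ of $z$ an associated $V_z\subset U_z$. Fix a small neighborhood $N_\sigma$ of $\sigma$ on which the local invariant manifolds $W^u_{loc}(\sigma)$, $W^s_{loc}(\sigma)$ and a local product structure are defined, and shrink $U_z$ so that $\phi_{[0,L]}(U_z)\cap N_\sigma=\emptyset$. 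This guarantees that any resulting $Y$ will agree with $X$ on $N_\sigma$, so $\sigma$ persists as a hyperbolic singularity of $Y$ with identical local invariant manifolds.

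Next I would produce candidates $x\in W^u_{loc}(\sigma)\setminus\{\sigma\}$ and $y\in W^s_{loc}(\sigma)\setminus\{\sigma\}$ meeting the hypotheses of the connecting lemma under $X$ itself. Pick a transitive point $x_0\in \Lambda$ with $\omega(x_0)=\Lambda$; since $\sigma, z\in\Lambda$, the forward orbit of $x_0$ returns to both $N_\sigma$ and $V_z$ infinitely often. Choose times $t_1<t_2$ with $p:=\phi_{t_1}(x_0)\in N_\sigma$ very close to $\sigma$ and $\phi_{t_2}(x_0)\in V_z$; since this orbit eventually leaves $N_\sigma$, one has $p\notin W^s_{loc}(\sigma)$, so $p$ has nonzero $W^u_{loc}$-component under the local product decomposition. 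Let $x$ be the projection of $p$ onto $W^u_{loc}(\sigma)$ along the $W^s_{loc}$-leaf through $p$. A $\lambda$-lemma argument using exponential contraction along $W^s_{loc}$ shows that the forward orbits of $x$ and $p$ converge exponentially as they exit $N_\sigma$; continuous dependence over the bounded additional time needed to reach $\phi_{t_2}(x_0)$ then implies that the forward $X$-orbit of $x$ also enters $V_z$, after possibly taking $p$ closer to $\sigma$ to absorb the error inside the open set $V_z$. Symmetrically, using a time-reversed orbit segment of $x_0$ going from $V_z$ into $N_\sigma$ close to $\sigma$, one obtains $y\in W^s_{loc}(\sigma)\setminus\{\sigma\}$ whose backward $X$-orbit enters $V_z$. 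Both $x$ and $y$ lie in $N_\sigma$, hence outside $\phi_{[0,L]}(U_z)$.

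Finally, Lemma~\ref{Lem:connecting} provides $Y\in \cU$ and $T>0$ with $\phi_T^Y(x)=y$ and $Y\equiv X$ outside $\phi_{[0,L]}(U_z)$. Since this support is disjoint from $N_\sigma$, one has $x\in W^u_{loc}(\sigma_Y)=W^u_{loc}(\sigma)$ and $y\in W^s_{loc}(\sigma_Y)=W^s_{loc}(\sigma)$. Hence the $Y$-orbit through $x$ lies in $W^u(\sigma_Y)$, reaches $y\in W^s(\sigma_Y)$ at time $T$, and then converges to $\sigma_Y$ in forward time; this is the desired homoclinic orbit of $\sigma_Y$.

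The delicate step is the middle one: producing $x$ \emph{exactly} on $W^u_{loc}(\sigma)$ whose $X$-orbit (and not that of any further perturbation) visits $V_z$, and likewise for $y$. The key mechanism is that the local product structure, combined with exponential contraction along $W^s_{loc}$ under the forward flow, forces the $W^u_{loc}$-projection of a point $p\in\Lambda$ deep inside $N_\sigma$ (with $p\notin W^s_{loc}$) to inherit the forward recurrence of $p$ through $V_z$.
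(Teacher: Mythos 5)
Your strategy is genuinely different from the paper's: you apply the Wen--Xia connecting lemma once, at a single recurrent point $z\in\Lambda$, hoping to join a point $x\in W^u_{loc}(\sigma)$ to a point $y\in W^s_{loc}(\sigma)$ whose orbits both visit $V_z$. The final gluing step is sound: if such $x,y$ exist and $\phi_{[0,L]}(U_z)\cap N_\sigma=\emptyset$, then $Y$ coincides with $X$ on $N_\sigma$, so $\sigma_Y=\sigma$, the local invariant manifolds persist, and the $Y$-orbit through $x$ is indeed homoclinic to $\sigma$. The gap is in \emph{producing} $x$ and $y$. You take $p=\phi_{t_1}(x_0)$ deep inside $N_\sigma$, project it to $x\in W^u_{loc}(\sigma)$, and argue that the orbits of $p$ and $x$ contract together exponentially inside $N_\sigma$, while ``continuous dependence over the bounded additional time needed to reach $\phi_{t_2}(x_0)$'' lets the orbit of $x$ land in $V_z$. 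That boundedness is exactly what is not justified. As you push $p\to\sigma$ (i.e.\ $t_1\to\infty$), the time $t_2$ at which the transitive orbit next visits $V_z$ also moves, and nothing bounds $t_2-t_{\mathrm{exit}}$: after leaving $N_\sigma$ the orbit may wander, even re-enter $N_\sigma$ repeatedly, before hitting $V_z$. Over an uncontrolled time interval the Lipschitz growth of the flow can overwhelm the exponential smallness of the exit distance, so ``taking $p$ closer to $\sigma$'' need not absorb the error. Equivalently, your argument silently assumes that the forward orbit closure of $W^u_{loc}(\sigma)\setminus\{\sigma\}$ meets the given $V_z$; for an arbitrary non-trivial transitive set this needs proof. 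The same issue recurs symmetrically for $y$.

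The paper sidesteps this entirely by applying the connecting lemma at \emph{two} points on the invariant manifolds themselves: it picks $p^u\in W^u(\sigma)\cap\Lambda\setminus\{\sigma\}$ and $p^s\in W^s(\sigma)\cap\Lambda\setminus\{\sigma\}$ (these exist by an inclination-lemma argument using a bi-transitive point $p$ with $\alpha(p)=\omega(p)=\Lambda$), chooses the tubes $\phi_{[0,L]}(U_{p^u})$ and $\phi_{[0,L]}(U_{p^s})$ disjoint from each other and from $\{p,\sigma\}$, and then connects $W^u(\sigma)\to p$ near $p^u$ (the unstable-manifold orbit trivially passes through $V_{p^u}$, and the backward orbit of $p$ enters $V_{p^u}$ since $p^u\in\alpha(p)$) and $p\to W^s(\sigma)$ near $p^s$ (symmetrically). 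Because the two perturbations have disjoint supports, they compose into a single $\varepsilon$-perturbation $Y$ with a homoclinic orbit, and no claim about invariant-manifold orbits visiting a prescribed open set is ever needed. If you want to keep your one-point scheme, you would need to first prove that $\overline{\bigcup_{t\ge0}\phi_t\bigl(W^u_{loc}(\sigma)\setminus\{\sigma\}\bigr)}$ and the analogous backward set both meet $V_z$; it is simpler to put the perturbation supports at $p^u$ and $p^s$ as the paper does.
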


\begin{proof}
We choose a point $p\in\Lambda$ such that $\omega(p)=\alpha(p)=\Lambda$ by the fact that $\Lambda$ is transitive. We then choose two points $p^s\in W^s(\sigma)\cap\Lambda\setminus\{\sigma\}$ and $p^u\in W^s(\sigma)\cap\Lambda\setminus\{\sigma\}$.

Given $\varepsilon>0$, for $p^s$ and $p^u$, by  Lemma~\ref{Lem:connecting}, we choose a constant $L>0$ large enough. Then we choose very small neighborhood $U_{p^s}$ and $U_{p^u}$ such that
\begin{itemize}
\item $p,\sigma\notin \phi_{[0,L]}(U_{p^s})$,
\item $p,\sigma\notin \phi_{[-L,0]}(U_{p^u})$,
\item $\phi_{[0,L]}(U_{p^s})\cap \phi_{[-L,0]}(U_{p^u})=\emptyset$.
\end{itemize}
Now we can have smaller neighborhoods $V_{p^s}$ and $V_{p^u}$ as in Lemma~\ref{Lem:connecting}. Since the forward and backward orbits of $p$ will enter $V_{p^s}$ and $V_{p^u}$ respectively, by applying the connecting lemma (Lemma~\ref{Lem:connecting}), one gets that $p$ is a homoclinic point of $\sigma$ for some $Y$ that is $\varepsilon$ $C^1$-close to $X$.
\end{proof}

\begin{proof}[Proof of Proposition~\ref{Pro:singular-measure-perturbation}]
By Lemma~\ref{Lem:homoclinic-orbit}, for any $\varepsilon>0$, there is a vector field $Y$ such that $Y$ is $\varepsilon/2$-close to $X$ in $C^1$-topology and $\sigma$ has a homoclinic orbit associated to $Y$. For the homoclinic orbit, by a simple and standard perturbation, one gets that there is a sequence of vector fields $\{Y_n\}$ such that each $Y_n$ has a periodic orbit $\gamma_n$ such that
\begin{itemize}
\item $\lim_{n\to\infty}Y_n=Y$,
\item the Hausdorff limit of $\{\gamma_n\}$ is the homoclinic orbit.
\end{itemize}
Since the unique ergodic measure supported on the homoclinic orbit is $\delta_\sigma$, one has that $\lim_{n\to\infty}\delta_{\gamma_n}=\delta_\sigma$.
\end{proof}

\subsection{The properties of homoclinic classes}
Given an open set $U$ and a hyperbolic periodic orbit $\gamma$ of $X$, one can define the local homoclinic class 
$$H_U(\gamma)={\rm Closure}\big\{x\in W^s(\gamma)\pitchfork W^u(\gamma):~{\rm Orb}(x)\subset U\big\}.$$
 Recall that two hyperbolic periodic orbits are \emph{homoclinically related}, if the stable manifold of one periodic orbit has non-empty transverse intersection with unstable manifold of the other periodic orbit, and vice versa.  

For a local homoclinic class $H_U(\gamma)$, one has the following property:

\begin{Lemma}\label{Lem:measure-approximation}
For a homoclinic class $H_U(\gamma)$, there is a sequence of periodic orbits $\{\gamma_n\}\subset U$ such that
\begin{itemize}

\item Each $\gamma_n$ is homoclinically related with $\gamma$.

\item $\lim_{n\to\infty}\gamma_n=H_U(\gamma)$.

\item $\lim_{n\to\infty}\delta_{\gamma_n}=\delta_\gamma$.

\end{itemize}

\end{Lemma}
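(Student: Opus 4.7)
The plan is to build each $\gamma_n$ as a periodic orbit of a transitive, locally maximal, uniformly hyperbolic set $\Lambda_n\subset U$ that simultaneously contains $\gamma$ and a finite collection of transverse homoclinic orbits which is dense enough in $H_U(\gamma)$, and then prescribe the symbolic itinerary of $\gamma_n$ inside $\Lambda_n$ so that it shadows $\gamma$ for a very long time and makes only brief excursions visiting each chosen homoclinic orbit.

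First I would fix, using the very definition of $H_U(\gamma)$, a countable sequence $\{y_k\}_{k\geq 1}$ of transverse homoclinic points of $\gamma$ with ${\rm Orb}(y_k)\subset U$ chosen so that $\bigcup_k {\rm Orb}(y_k)$ is dense in $H_U(\gamma)$. For each $n$, the classical Birkhoff--Smale construction, applied via a Poincar\'e section through $\gamma$ to the finitely many orbits of $y_1,\ldots,y_n$, produces a transitive, locally maximal, uniformly hyperbolic compact set $\Lambda_n\subset U$ that contains $\gamma$ and the orbits of $y_1,\ldots,y_n$. This $\Lambda_n$ is conjugate to a suspension of a transitive subshift of finite type in which $\gamma$ corresponds to a periodic symbol and each $y_k$ to a word bi-asymptotic to that symbol. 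Because $\Lambda_n$ is a single uniformly hyperbolic basic set containing $\gamma$, any periodic orbit of $\Lambda_n$ is automatically homoclinically related with $\gamma$, which disposes of the first bullet.

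Next I would exploit the specification/shadowing property of $\Lambda_n$ to construct an admissible periodic word which, cycling through $k=1,\ldots,n$, first shadows the code of $\gamma$ for a long time $S_n$ and then briefly excurses along the code of $y_k$ before returning, closing up after the $n$-th excursion. Let $\gamma_n\subset\Lambda_n\subset U$ be the corresponding periodic orbit, with period $T_n$ and total excursion time $E_n=\sum_{k\leq n}\tau_k$, where each individual $\tau_k$ is bounded independently of $S_n$. Then $\delta_{\gamma_n}$ differs from $\delta_\gamma$ only by the contribution of the excursions, which is of order $E_n/T_n$, so taking $S_n\to\infty$ with $S_n/E_n\to\infty$ gives $\delta_{\gamma_n}\to\delta_\gamma$. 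At the same time, $\bigcup_{k\leq n}{\rm Orb}(y_k)$ is eventually a $1/n$-net in $H_U(\gamma)$; the shadowing ensures $\gamma_n$ comes within $o(1)$ of every $y_k$, while $\Lambda_n$ can itself be arranged to lie in an arbitrarily small neighborhood of ${\rm Orb}(\gamma)\cup\bigcup_{k\leq n}{\rm Orb}(y_k)\subset H_U(\gamma)$ by taking the Poincar\'e rectangles thin enough. This yields the Hausdorff convergence $\gamma_n\to H_U(\gamma)$.

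The main difficulty is the simultaneous balancing of the second and third bullets: Hausdorff convergence forces $\gamma_n$ to visit every corner of $H_U(\gamma)$, while convergence of $\delta_{\gamma_n}$ to $\delta_\gamma$ forces $\gamma_n$ to spend almost all its time near $\gamma$. This is resolvable because $H_U(\gamma)$ is compact, so a $1/n$-net has bounded cardinality $N_n$, the total excursion time is at most $CN_n$ for some $C$ independent of $S_n$, and it suffices to take $S_n$ with $S_n/(CN_n)\to\infty$. A secondary technical point, handled through a Poincar\'e section and suspension, is running the Birkhoff--Smale and specification machinery in the flow category rather than for a diffeomorphism.
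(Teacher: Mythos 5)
Your construction follows essentially the paper's route: fix finitely many transverse homoclinic orbits of $\gamma$ inside $U$ whose union with $\gamma$ is Hausdorff-close to $H_U(\gamma)$, enclose them in a locally maximal hyperbolic set via Birkhoff--Smale, and shadow inside that set to produce periodic orbits homoclinically related with $\gamma$. The one genuine difference is in how the weak$*$ convergence $\delta_{\gamma_n}\to\delta_\gamma$ is extracted. You arrange it by hand, prescribing an itinerary that spends a long time $S_n$ shadowing $\gamma$ against bounded-length excursions along the $y_k$'s and then tuning $S_n$ against the total excursion time. The paper instead observes that the auxiliary set $\Gamma$ (the union of $\gamma$ with finitely many transverse homoclinic orbits) is a chain transitive hyperbolic set whose only invariant measure is $\delta_\gamma$, because the homoclinic orbits are wandering inside $\Gamma$; consequently any sequence of periodic orbits converging to $\Gamma$ in Hausdorff distance automatically has $\delta_{\gamma_n}\to\delta_\gamma$, with no time-balancing needed. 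Your approach is correct and more explicit (and avoids the paper's final diagonalization over increasing $\Gamma$'s), while the paper's uniqueness-of-invariant-measure shortcut is cleaner and dispenses entirely with tuning specification times.
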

\begin{proof}
By the definition of a local homoclinic class,  there is a compact invariant set $\Gamma$ such that
\begin{itemize}
\item $\Gamma$ is the union of $\gamma$ and finitely many transverse homoclinic orbits of $\gamma$ in $U$.
\item $\Gamma$ is very close to $H_U(\gamma)$ in the Hausdorff distance.
\end{itemize}
We know that $\Gamma$ is a chain transitive hyperbolic set and the unique invariant measure on $\Gamma$ is $\delta_\gamma$. By applying the Shadowing lemma, there is a sequence of periodic orbits $\gamma_n$ such that
$\lim_{n\to\infty}\gamma_n=\Gamma$ and each $\gamma_n$ is homoclinically related with $\gamma$. Since the unique invariant measure on $\Gamma$ is $\delta_{\gamma}$, one has that $\lim_{n\to\infty}\delta_{\gamma_n}=\delta_\gamma$. One can conclude by increasing $\Gamma$.
\end{proof}

By a well-known result in \cite{BC}, one has
\begin{Proposition}\label{Pro:generic}
There is a dense $G_\delta$ set $\cG$ in $\cX^1(M)$ such that for any $X\in\cG$,  if $\Lambda$ is a non-trivial isolated transitive set, then $\Lambda$ is a local homoclinic class $H(\gamma)$.
\end{Proposition}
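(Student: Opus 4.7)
The plan is to combine three standard $C^1$-generic ingredients: Pugh's closing lemma (in its $C^1$ generic form for flows), the connecting lemma for pseudo-orbits of Bonatti--Crovisier \cite{BC}, and the elementary fact that an isolated chain transitive set is a chain recurrence class.

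First, I would invoke the $C^1$ generic version of Pugh's closing lemma: there is a residual set $\cG_1\subset\cX^1(M)$ such that for every $X\in\cG_1$ and every non-trivial chain recurrence class $C$ of $X$, the hyperbolic periodic orbits contained in $C$ form a dense subset of $C$. Applied to a non-trivial isolated transitive set $\Lambda$ (which is non-trivial, hence not reduced to a singularity or periodic orbit, but is chain transitive and therefore lies in a single chain recurrence class), this guarantees the existence of at least one hyperbolic periodic orbit $\gamma\subset\Lambda$.

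Next, by the main theorem of \cite{BC}, there is a residual set $\cG_2\subset\cX^1(M)$ such that for every $X\in\cG_2$ and every hyperbolic periodic orbit $\gamma$ of $X$, the chain recurrence class $C(\gamma)$ coincides with the homoclinic class $H(\gamma)$. I would take $\cG=\cG_1\cap\cG_2$; this is still a dense $G_\delta$ subset of $\cX^1(M)$.

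Finally, I need to verify that the non-trivial isolated transitive set $\Lambda$ actually equals $C(\gamma)$ (rather than being merely contained in it), so that the combination $\Lambda = C(\gamma) = H(\gamma)$ yields the conclusion. Let $U$ be the isolating neighborhood, so that $\bigcap_{t\in\RR}\phi_t(U)=\Lambda$. Using that $\Lambda$ is compact and invariant, one can choose a smaller neighborhood $V\Subset U$ together with $\varepsilon_0>0$ such that every $\varepsilon_0$-pseudo-orbit starting in $\Lambda$ and ending in $\Lambda$ remains inside $U$ in forward and backward time. Any point chain related to $\Lambda$ is then forced to lie in $\bigcap_{t\in\RR}\phi_t(U)=\Lambda$, so the chain recurrence class $C(\gamma)$ containing $\Lambda$ is exactly $\Lambda$. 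Since $\gamma\subset\Lambda$ was hyperbolic, this gives $\Lambda=H(\gamma)$, which is the desired local homoclinic class.

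The main obstacle is the third step: the trapping argument for pseudo-orbits in the isolating neighborhood. It is a standard Conley-theoretic fact that an isolated chain transitive set is a chain recurrence class, but one must be careful that pseudo-orbits can jump, so the isolation estimate requires choosing $\varepsilon_0$ smaller than the distance from $\Lambda$ to $\partial U$ divided by a constant controlling the flow displacement, and appealing to uniform continuity of the flow on compact time intervals. Once this is done, the conclusion is immediate from \cite{BC} combined with the generic closing lemma.
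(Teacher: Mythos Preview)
The paper gives no proof here; it simply attributes the statement to \cite{BC}. Your steps 1 and 2 are the right ingredients, but step~3 contains a genuine error---you were right to flag it as the obstacle, yet it is not merely a matter of choosing $\varepsilon_0$ carefully: the claimed ``standard Conley-theoretic fact'' is false. An isolated transitive set need not equal its chain recurrence class, even $C^1$-generically. Take two disjoint hyperbolic basic sets $\Lambda_1,\Lambda_2$ of the same index, linked by a robust transverse heteroclinic cycle. Each $\Lambda_i$ is locally maximal (hence isolated), transitive, and non-trivial, yet they lie in the \emph{same} chain recurrence class $C(\gamma)=H(\gamma)\supsetneq\Lambda_1$. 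Your trapping assertion fails outright in this picture: start in $\Lambda_1$, make an $\varepsilon$-jump onto an outgoing heteroclinic orbit, travel to $\Lambda_2$, and return along the incoming connection---this is an $\varepsilon$-pseudo-orbit from $\Lambda_1$ to $\Lambda_1$ that leaves $U_1$. Local maximality constrains genuine orbits that remain in $U$, not pseudo-orbits that leave and come back.

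The resolution is that the proposition concerns the \emph{local} homoclinic class $H_U(\gamma)$ (the paper writes $H(\gamma)$ but says ``local homoclinic class'' and has just defined $H_U(\gamma)$), not the global one; in the example above one still has $\Lambda_1=H_{U_1}(\gamma)$ even though $\Lambda_1\subsetneq C(\gamma)$. The correct extraction from \cite{BC} is therefore relative: after producing a hyperbolic periodic orbit $\gamma\subset\Lambda$ via the generic closing lemma plus isolation, one applies the connecting lemma for pseudo-orbits \emph{restricted to $U$}---equivalently, identifies $\Lambda$ with the chain recurrence class of $\gamma$ relative to $\overline U$ and invokes the relative form of the Bonatti--Crovisier theorem---to conclude that transverse homoclinic points of $\gamma$ whose orbit stays in $U$ are dense in $\Lambda$. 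Your global identification $\Lambda=C(\gamma)$ cannot be salvaged without this localization.
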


\subsection{Ergodic closing lemma and its consequence}

R. Ma\~n\'e \cite{Ma} proved the celebrated ergodic closing lemma. One has the following version for vector fields proved by L. Wen \cite{Wen}.

\begin{Lemma}\label{Lemma:measure-accumulation}
Assume that $\mu$ is an ergodic non-singular measure of a vector field $X$. Then for any $\varepsilon>0$, there are a vector field $Y$ and a periodic orbit $\gamma$ of $Y$ such that
\begin{itemize}
\item $Y$ is $\varepsilon$ close to $X$;

\item the periodic measure $\delta_\gamma$ is $\varepsilon$ close to $\mu$;

\item The support of $\mu$ is $\varepsilon$-close to $\gamma$ in the Hausdorff distance.

\end{itemize}

\end{Lemma}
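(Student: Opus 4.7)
The plan is to derive this directly from the Mañé–Wen ergodic closing lemma combined with Birkhoff's ergodic theorem. Recall that a regular point $x$ is called \emph{strongly closable} for $X$ if for every $C^1$-neighborhood $\cU$ of $X$ and every $\eta>0$ there exist $Y\in\cU$, a periodic point $p$ of $Y$ with period $T_p>0$, and a piece of $X$-orbit $\phi_{[0,T_p]}(x)$ such that $Y$ coincides with $X$ outside an $\eta$-tubular neighborhood of that piece, $d(\phi_t^Y(p),\phi_t^X(x))<\eta$ for all $t\in[0,T_p]$, and $d(p,x)<\eta$. Wen's version of Mañé's theorem (proved in \cite{Wen}) asserts that the set $\Sigma(X)$ of strongly closable points has full measure for every non-singular invariant measure of $X$.

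First I would fix an ergodic non-singular measure $\mu$ and pick a point $x\in\supp(\mu)$ with three simultaneous properties: (i) $x\in\Sigma(X)$, which holds $\mu$-a.e.\ by the ergodic closing lemma; (ii) $x$ is Birkhoff-generic, i.e.\ the empirical measures $\frac{1}{T}\int_0^T\delta_{\phi_s(x)}\,\ud s$ converge weak-$*$ to $\mu$ as $T\to\infty$, which holds $\mu$-a.e.\ by the Birkhoff ergodic theorem applied to a countable dense subset of $C^0(M)$; and (iii) the forward orbit of $x$ is dense in $\supp(\mu)$, which again holds $\mu$-a.e.\ by ergodicity. The intersection of these three full-measure sets is non-empty, so such an $x$ exists.

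Next, given $\varepsilon>0$, I would choose $T>0$ large enough so that $\frac{1}{T}\int_0^T\delta_{\phi_s(x)}\,\ud s$ is $\varepsilon/3$-close to $\mu$ in the weak-$*$ distance $\ud$, and so that the orbit segment $\phi_{[0,T]}(x)$ is $\varepsilon/3$-dense in $\supp(\mu)$ in the Hausdorff metric. Since $x\in\Sigma(X)$, I would then apply the strong closing property with a very small $\eta=\eta(\varepsilon,T)$ chosen so that any $\eta$-shadowing on the time interval $[0,T]$ forces the induced periodic empirical measure to be within $\varepsilon/3$ of the empirical measure of $x$ on $[0,T]$ (uniform continuity of finitely many test functions on a dense subset of $C^0(M)$) and forces the periodic orbit to be within $\varepsilon/3$ of $\phi_{[0,T]}(x)$ in Hausdorff distance. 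This yields $Y$ that is $\varepsilon$-$C^1$-close to $X$ and a periodic orbit $\gamma$ of $Y$ of period close to $T$ whose orbit $\eta$-shadows $\phi_{[0,T]}(x)$.

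Putting the three estimates together via the triangle inequality gives $\ud(\delta_\gamma,\mu)<\varepsilon$ and $d_H(\gamma,\supp(\mu))<\varepsilon$, which is the desired conclusion. The main obstacle is the combined control: the closing lemma provides a perturbation whose size is dictated by the \emph{length} of the orbit one wants to close, while we simultaneously need $T$ large (so the empirical measure is near $\mu$ and the orbit is dense in $\supp(\mu)$) and the perturbation small (so $Y$ is $\varepsilon$-close to $X$). This tension is exactly what Mañé's selection of strongly closable points resolves: along $\Sigma(X)$ one can close arbitrarily long pieces with arbitrarily small $C^1$-perturbations, and the shadowing quality $\eta$ can be made as small as needed independently of $T$, which is the non-trivial content that we borrow from \cite{Ma,Wen}.
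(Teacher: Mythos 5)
The paper does not actually prove this lemma; it is invoked as a known result, attributed to Ma\~n\'e's ergodic closing lemma \cite{Ma} and Wen's adaptation to flows \cite{Wen}, and used as a black box. Your derivation — combine the pointwise ``strongly closable'' formulation of Ma\~n\'e--Wen with Birkhoff's ergodic theorem and the genericity of points with dense forward orbit in $\supp(\mu)$ — is indeed the standard way one passes from the pointwise theorem to the measure-theoretic statement, and the main ideas are in place.

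One technical point should be tightened. The strong closing property at $x$ lets you prescribe the shadowing precision $\eta$, but it does not let you prescribe the closing time $T$: the period $T_p$ of the returned periodic orbit is out of your direct control. Your phrasing ``apply the strong closing property with a very small $\eta=\eta(\varepsilon,T)$'' followed by ``a periodic orbit of period close to $T$'' is therefore imprecise. The correct quantifier order is the following. If $\mu$ is a periodic measure, the lemma is trivial, so assume it is not; then the chosen generic $x$ is not a periodic point. First, using Birkhoff genericity and the density of the forward orbit of $x$ in $\supp(\mu)$, fix $T_0$ so large that for every $T\ge T_0$ the empirical measure $\frac{1}{T}\int_0^T\delta_{\phi_s(x)}\,\ud s$ is within $\varepsilon/3$ of $\mu$ and $\phi_{[0,T]}(x)$ is $\varepsilon/3$-dense in $\supp(\mu)$. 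Then choose $\eta$ so small that (a) any periodic orbit of a nearby vector field that $\eta$-shadows a piece $\phi_{[0,T_p]}(x)$ of the $X$-orbit must satisfy $T_p\ge T_0$ — this uses that $x$ is non-periodic, so that $\phi_t(x)$ stays a definite distance away from $x$ for $t$ in a bounded range away from $0$ — and (b) $\eta$-shadowing on $[0,T_p]$ transports both the empirical measure and the Hausdorff position of the orbit segment to within $\varepsilon/3$. With that reorganization the triangle inequality closes the argument exactly as you describe.
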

An invariant measure $\mu$ is \emph{non-singular} if $\mu(\sing(X))=0$. As a   corollary of Lemma~\ref{Lemma:measure-accumulation}, one has the following result. 
\begin{Corollary}\label{Cor:non-singular-periodic-accumulation}
There is a dense $G_\delta$ subset $\cG\subset \cX^1(M)$ such that for any vector field $X\in\cG$, any non-singular ergodic measure $\mu$ is accumulated by a sequence of periodic measures in the weak*-topology.

\end{Corollary}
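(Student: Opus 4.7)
The plan is a standard Baire-category argument built on top of the ergodic closing lemma (Lemma~\ref{Lemma:measure-accumulation}). Fix a countable basis $\{O_i\}_{i\in\NN}$ for the weak*-topology on the space of Borel probability measures of $M$. For each $i$, introduce
$$\cV_i := \{X \in \cX^1(M) : X \text{ has a hyperbolic periodic orbit $\gamma$ with } \delta_\gamma \in O_i\}.$$
The implicit function theorem (persistence of hyperbolic periodic orbits) and continuous dependence of the periodic measure on the vector field imply that $\cV_i$ is open in $\cX^1(M)$. Consequently $\cW_i := \cV_i \cup \big(\cX^1(M) \setminus \overline{\cV_i}\big)$ is open and dense (its complement is $\partial \cV_i$, which is nowhere dense), and $\cG := \bigcap_{i\in\NN} \cW_i$ is the residual subset we will show works.

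To verify $\cG$ has the desired property, fix $X\in \cG$ and let $\mu$ be a non-singular ergodic measure of $X$. For each $i$ with $\mu \in O_i$, I claim $X\in \cV_i$. Suppose on the contrary $X \notin \cV_i$; then since $X \in \cW_i$, the vector field $X$ admits a $C^1$-neighborhood $\cU$ disjoint from $\cV_i$. Choose $\varepsilon>0$ small enough so that every $\varepsilon$-perturbation of $X$ lies in $\cU$ and every probability measure $\varepsilon$-close (in the weak*-metric) to $\mu$ still belongs to $O_i$. Lemma~\ref{Lemma:measure-accumulation} furnishes a vector field $Y\in \cU$ together with a periodic orbit $\gamma$ of $Y$ such that $\delta_\gamma \in O_i$.

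An additional arbitrarily small $C^1$-perturbation (standard, via Franks' lemma applied near the orbit) then turns $\gamma$ into a hyperbolic periodic orbit of some vector field $Y'$, while preserving $\delta_\gamma \in O_i$ and keeping the vector field inside $\cU$. This produces a vector field in $\cU\cap \cV_i$, contradicting $\cU\cap \cV_i = \emptyset$. Hence $X\in \cV_i$, i.e.\ $X$ has a hyperbolic periodic orbit whose invariant measure lies in $O_i$. Letting $i$ range over a neighborhood basis of $\mu$, we conclude that $\mu$ is a weak*-limit of periodic measures of $X$.

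The one delicate point is the extra small perturbation at the end: turning the (possibly non-hyperbolic) periodic orbit produced by Lemma~\ref{Lemma:measure-accumulation} into a hyperbolic periodic orbit while controlling both the vector-field perturbation and the measure approximation simultaneously. This is routine and well documented in the $C^1$-perturbation literature, but it is where all the care of the proof is concentrated.
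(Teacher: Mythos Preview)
Your argument is correct and is exactly the standard Baire-category derivation that the paper has in mind when it says the corollary follows directly from Lemma~\ref{Lemma:measure-accumulation}; the paper gives no further details, and you have supplied them faithfully, including the one nontrivial step (using a Franks-type perturbation to make the closed orbit hyperbolic so that $\cV_i$ is genuinely open).
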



Given finitely many invariant measures $\mu_1,\cdots,\mu_k$ and   non-negative numbers $\lambda_1,\cdots,\lambda_k$ satisfying $\sum_{i=1}^k\lambda_i=1$, the invariant measure $\sum_{i=1}^k\lambda_i\cdot\mu_k$ is said to be a \emph{finite convex combination} of $\{\mu_i\}_{i=1}^k$.

%
%
%

%

\subsection{The proof of Theorem~\ref{Thm:generic-measure}}

We have the following version of \cite[Theorem 3.10]{ABC}
\begin{Theorem}\label{Thm:combination-periodic}
There is a dense $G_\delta$ subset $\cG\subset \cX^1(M)$ such that for any vector field $X\in\cG$, for any isolated transitive set $\Lambda$ of $X$, any finite convex combination of periodic measures can be accumulated by a sequence of periodic measures.

\end{Theorem}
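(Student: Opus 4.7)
The plan is to follow the strategy of \cite[Theorem 3.10]{ABC} for diffeomorphisms, adapted to the flow setting using the ingredients assembled earlier in this section. The central idea is that any finite family of pairwise homoclinically related hyperbolic periodic orbits of $X$ sits inside a horseshoe-like hyperbolic basic set, and inside such a set the shadowing lemma allows us to prescribe the statistics of periodic orbits, so in particular to realize rational convex combinations of periodic measures as empirical measures of genuine periodic orbits.

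\medskip

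The first step is to refine $\cG$ by intersecting the residual set from Proposition~\ref{Pro:generic} with the classical Kupka--Smale residual set, and with the $C^1$-generic set on which, inside each non-trivial isolated transitive set $\Lambda=H_U(\gamma)$, every periodic orbit is hyperbolic and homoclinically related to $\gamma$; the latter property is $C^1$-generic by a standard application of the connecting lemma (Lemma~\ref{Lem:connecting}) in the spirit of Lemma~\ref{Lem:measure-approximation}. On this refined $\cG$, any two periodic orbits in $\Lambda$ are pairwise homoclinically related.

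\medskip

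The second step is to construct, for any fixed periodic orbits $\gamma_1,\dots,\gamma_k\subset\Lambda$, a locally maximal hyperbolic basic set $K\subset\Lambda$ containing $\gamma_1\cup\cdots\cup\gamma_k$. Selecting a transverse heteroclinic orbit for each ordered pair $(\gamma_i,\gamma_j)$ yields a heteroclinic cycle, and the $\lambda$-lemma applied on Poincar\'e sections transverse to the $\gamma_i$'s produces a hyperbolic maximal invariant set in a small neighborhood of this cycle, admitting a Markov structure and hence the shadowing property. The third step realizes $\mu=\sum_i\lambda_i\delta_{\gamma_i}$ as a weak$\ast$ limit of periodic measures: for rational coefficients $\lambda_i=p_i/q$, build a periodic pseudo-orbit in $K$ that winds approximately $Np_i/T_i$ times around $\gamma_i$ (for large integer $N$, in a fixed cyclic order on $i$), separated by heteroclinic transit segments of uniformly bounded length, so that the time spent near $\gamma_i$ is proportional to $\lambda_i$. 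Shadowing in $K$ closes this pseudo-orbit into a genuine periodic orbit $\alpha_N$ of $X$. The empirical measure of $\alpha_N$ differs from $\mu$ by $O(1/N)$, since the sojourn time near $\gamma_i$ grows linearly in $N$ while the total heteroclinic transit time remains $O(1)$. For irrational $\lambda_i$, a rational approximation plus a diagonal extraction concludes.

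\medskip

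The main obstacle is the flow-specific verification that the hyperbolic basic set $K$ can be built with uniform hyperbolicity constants and uniformly bounded heteroclinic transit times, so that the statistical error contributed by the transition segments is genuinely of lower order than the linear-in-$N$ contribution of the circuits around the $\gamma_i$'s. Once this technical point is handled exactly as in the diffeomorphism argument of \cite[Theorem 3.10]{ABC}, the remainder of the proof is a direct transcription of the statistical shadowing scheme.
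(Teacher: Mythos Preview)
The paper does not actually give a proof of this theorem: immediately after the statement it writes ``We do not give the proof of Theorem~\ref{Thm:combination-periodic} because it only concerns the regular part of the dynamics and has nothing to do with singularities,'' and simply defers to the diffeomorphism version \cite[Theorem 3.10]{ABC}. Your proposal is precisely the flow transcription of that argument---homoclinic relation of all periodic orbits in $\Lambda$, a hyperbolic basic set containing the given orbits, and statistical shadowing---so it is exactly the approach the paper has in mind.
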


We do not give the proof of Theorem~\ref{Thm:combination-periodic} because it only concerns the regular part of the dynamics and has nothing to do with singularities.

Now we can give the proof of Theorem~\ref{Thm:generic-measure}.
\begin{proof}[Proof of Theorem~\ref{Thm:generic-measure}]
We take the dense $G_\delta$ subset $\cG$ satisfying the properties  in Corollary~\ref{Cor:generic-accumulation}, Corollary~\ref{Cor:non-singular-periodic-accumulation} and Theorem~\ref{Thm:combination-periodic}

\smallskip

Now we assume that  $X\in\cG$ and $\Lambda$ is a non-trivial isolated transitive set.

\paragraph{The periodic accumulation.} By the ergodic decomposition theorem, as observed by \cite[Page 19]{ABC}, any invariant measure $\mu$ can be accumulated by finite convex combinations of ergodic measures. One has the following properties.
\begin{itemize}
\item Since $\Lambda$ is a non-trivial isolated transitive set, any singularity in $\Lambda$ accumulated by recurrent regular points. Thus, by Corollary~\ref{Cor:generic-accumulation}, any singular measure is accumulated by periodic measures.
\item By Corollary~\ref{Cor:non-singular-periodic-accumulation}, any non-singular ergodic measure can be accumulated by periodic measures, and such periodic measures are supported on $\Lambda$ due to the isolation property of $\Lambda.$
\end{itemize}
Thus, any invariant measure $\mu$ can be accumulated by finite convex combinations of periodic measures. By Theorem~\ref{Thm:combination-periodic}, any finite convex combination
of periodic measures can be accumulated by periodic measures. 

Combining with  the results above, one knows that any invariant measure on $\Lambda$ can be accumulated by periodic measures.

\paragraph{Full support.} Now we show that generic measures have full support.  We use $\cC_M$ to denote the space of compact subsets of $M$ endowed with Hausdorff topology.  Since  the map
\begin{eqnarray*}
\cT: \  \cM_{inv}(\phi_t) & \to &  \cC_M \\
    \mu  & \mapsto & {\rm supp}(\mu)
\end{eqnarray*}
is lower semi-continuous, hence  generic measures are continuous points of $\cT$. Let $\mu$ be such kind of generic point. By the first item, one has that
$$\mu=\lim_{n\to\infty}\delta_{\gamma_n}.$$
By Lemma~\ref{Lem:measure-approximation}, there is a sequence of periodic orbits $\gamma_n'$ such that
\begin{itemize}

\item $\gamma_n$ is homoclinically related with $\gamma_n'$.

\item $\lim_{n\to\infty}\gamma_n'=\Lambda$. In other words, $\lim_{n\to\infty}{\rm supp}(\delta_{\gamma_n'})=\Lambda$.

\item The distance between $\delta_{\gamma_n}$ and $\delta_{\gamma_n'}$ is less than $1/n$.

\end{itemize}
Thus, we have that
$${\rm supp}(\mu)=\lim_{n\to\infty}{\rm supp}(\delta_{\gamma_n'})=\Lambda.$$
\end{proof}

\section{The proof of the main theorems}

\subsection{Singular hyperbolic attractors}
For proving Theorem~\ref{thm.singular-hyperbolic}, we have to verify the properties $\cP_e^+$ and $\cS$.

\begin{Theorem}\label{Thm:singular-usc}
Any singular hyperbolic attractor $\Lambda$ has the properties $\cS$ and $\cP_e^+$.
\end{Theorem}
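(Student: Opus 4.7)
The plan is to verify the two properties separately. The conceptual heart is that any ergodic measure on $\Lambda$ with positive metric entropy is automatically non-singular and hyperbolic, which makes it amenable to Katok-type periodic approximation; the upper semi-continuity of entropy then requires a more delicate argument that exploits the dominated splitting and the local geometry near singularities.

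\textbf{Property $\cP_e^+$.} Let $\mu\in\cM_{erg}(\phi_t)$ with $h_\mu(\phi_t)>0$. The first step is to observe that $\mu(\sing(X))=0$: if $\mu$ gave positive mass to $\sing(X)$, then by ergodicity $\mu$ would be $\delta_\sigma$ for some singularity $\sigma$, contradicting positive entropy. Hence $\mu$ is non-singular and the flow direction contributes a zero Lyapunov exponent. Since the flow direction lies in $E^{cu}$, the sectional expansion $\det(D\phi_t|_P)\ge C^{-1}e^{\lambda t}$ on any 2-plane $P\subset E^{cu}$, combined with Oseledets' theorem, forces every other Lyapunov exponent in $E^{cu}$ to be at least $\lambda$. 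The uniform contraction on $E^{ss}$ produces negative exponents along $E^{ss}$, so $\mu$ is a hyperbolic ergodic measure. A flow version of Katok's horseshoe/closing theorem (in the spirit of Lian--Young) then yields periodic orbits whose invariant measures approximate $\mu$ in the weak$^*$ topology, giving $\cP_e^+$.

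\textbf{Property $\cS$.} For the upper semi-continuity of entropy I would invoke the theory that has been developed for singular hyperbolic attractors (along the lines of Pacifico--Yang and the recent progress cited in the paper as [SYY]). Given $\mu_n\to\mu$, one writes the entropy via a Pesin/Ruelle-type expression in terms of the positive Lyapunov exponents, bounds it by integrals of continuous functions built from Jacobians of 2-planes in $E^{cu}$, and separates the contributions near $\sing(X)$ from those away from it. Away from the singular set the dominated splitting together with the sectional expansion makes these bounds semi-continuous in $\mu$ (a Misiurewicz/Newhouse-type argument adapted to dominated splittings), while near any singularity a local geometric model of Lorenz type ensures that the entropy produced in a sufficiently thin tubular neighborhood of $\sing(X)$ is uniformly small, so no entropy can be lost to the limit.

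\textbf{The main obstacle.} Property $\cP_e^+$ reduces to a clean hyperbolicity-plus-Katok argument once one exploits that the flow direction sits inside $E^{cu}$. The genuinely delicate step is $\cS$: one must rule out that a sequence of high-entropy ergodic measures concentrates its entropy near the singular set in the limit. This requires uniform control on the entropy produced by orbits spending long excursions close to a Lorenz-type singularity, and it is precisely the step where the special geometry of singular hyperbolic attractors (beyond mere dominated splitting) is essential. The same difficulty is acknowledged in the paper's remark on Conjecture~\ref{con:with-singularity}, and accordingly it is where I expect to spend the majority of the technical work.
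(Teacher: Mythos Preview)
Your argument for $\cP_e^+$ correctly shows that a positive-entropy ergodic measure on $\Lambda$ must be non-singular and hyperbolic (the flow direction lies in $E^{cu}$, sectional expansion forces the remaining $E^{cu}$-exponents to be positive, and $E^{ss}$ gives the negative ones). The gap is in the closing step: Katok's theorem and its Lian--Young flow version require $C^{1+\alpha}$ regularity because they rest on Pesin theory, whereas Theorem~\ref{thm.singular-hyperbolic} is stated for $C^1$ vector fields. The paper avoids this by using Liao's shadowing lemma \cite{L85} (as formulated in \cite{GY}), which works in the $C^1$ category precisely because the dominated splitting of the singular hyperbolic structure supplies the uniformity that Pesin charts would otherwise provide; the periodic approximation is then read off as in \cite{SGW} (see also \cite{WYZ}).

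For $\cS$, your plan is considerably heavier than what is needed, and the citation of \cite{SYY} is off target---that paper concerns the more general setting of Conjecture~\ref{con:with-singularity}, not singular hyperbolic attractors. The paper's proof is a two-line reduction: by \cite{PYY} every singular hyperbolic attractor is \emph{entropy expansive}, and Bowen \cite{B72} showed that entropy expansiveness implies upper semi-continuity of metric entropy. All the delicate control near singularities that you anticipate is already absorbed into the entropy-expansiveness statement; once that is quoted, no Ruelle-inequality bounds or local Lorenz models are needed.
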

\begin{proof}
It has been proved in \cite{PYY} that any singular hyperbolic attractor is entropy expansive. Then a classical work of Bowen \cite{B72} implies such system has the property $\cS$.

Now we consider an ergodic measure $\mu$, which is supported on a singular hyperbolic attractor and  has positive entropy. Since the entropy of $\mu$ is positive, we know that $\mu$ cannot be supported on singularities. Thus, $\mu$ is a regular hyperbolic measure. By applying a shadowing lemma of Liao \cite{L85}, as stated in \cite{GY}, it has been shown in \cite[Page 214]{SGW} that $\mu$ is accumulated by  periodic measures. One can also see \cite{WYZ} for some details.

\end{proof}

Now Theorem~\ref{thm.singular-hyperbolic} follows from Theorem~\ref{thm.abstract-result}, Theorem~\ref{Thm:singular-usc} and Theorem~\ref{Thm:generic-measure} directly.

\subsection{$C^\infty$ surface diffeomorphisms}
We are going to prove Theorem~\ref{Thm:surface-diff}. We have the following result from Yomdin \cite{Y} and Newhouse \cite{N}.

\begin{Theorem}\label{Thm:surface-expansive}
Any $C^\infty$ diffeomorphism has the property $\cS$.
\end{Theorem}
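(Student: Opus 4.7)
The plan is to reduce the statement to the known work of Newhouse, invoking Yomdin's estimates as the underlying smooth ingredient. Property $\cS$ asks that for every weak$^*$-convergent sequence $\mu_n \to \mu$ of invariant probability measures of $f$, one has $h_\mu(f) \geq \limsup_{n\to\infty} h_{\mu_n}(f)$. This is precisely the upper semi-continuity of the metric entropy functional $\mu \mapsto h_\mu(f)$ on $\cM_{inv}(f)$, which is exactly the content of Newhouse's theorem on continuity properties of entropy for $C^\infty$ maps. Since a $C^\infty$ surface diffeomorphism is in particular a $C^\infty$ diffeomorphism on a compact manifold, Newhouse's theorem applies.

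The first step I would take is to recall, for any fixed $\mu_n \to \mu$, the variational principle packaged with local entropy: one has $h_\mu(f) \ge h_{\mu_n}(f) - h^*_\mu(f,\varepsilon)$ in the limit, where $h^*_\mu(f,\varepsilon)$ is a tail/local entropy term controlled by the scale $\varepsilon$. The mechanism then reduces to showing that this local entropy term vanishes as $\varepsilon \to 0$. This is where smoothness enters: Yomdin's theorem bounds the exponential local volume growth of smooth disks by a quantity depending on the dilation of $Df$, with an error that decays as the regularity $r \to \infty$. For $r = \infty$ the error disappears, so local entropy at small scales is zero uniformly in $\mu$.

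Putting these ingredients together, I would simply invoke Newhouse's main theorem from \cite{N} (which in turn rests on Yomdin's estimates \cite{Y}) to conclude that for any $C^\infty$ diffeomorphism $f$ of a compact manifold (in particular a surface), the map $\mu \mapsto h_\mu(f)$ is upper semi-continuous on $\cM_{inv}(f)$. This is property $\cS$ by definition, ending the proof.

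The only subtle point — and the reason I would not try to reprove the result from scratch here — is the Yomdin inequality; estimating local volume growth of $C^\infty$ images of disks via semi-algebraic approximation is the genuinely hard ingredient. In the present paper it is perfectly acceptable (and in fact intended) to quote \cite{Y} and \cite{N} as black boxes.
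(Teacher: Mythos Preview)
Your proposal is correct and matches the paper's approach exactly: the paper gives no argument beyond citing Yomdin \cite{Y} and Newhouse \cite{N}, and you do the same while supplying a bit of helpful context about the local-entropy mechanism. There is nothing to add or correct.
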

The following result is also well known. We give a sketch.

\begin{Theorem}\label{Thm:surface-positive}
Any $C^\infty$ surface diffeomorphism  has the property $\cP_e^+$.
\end{Theorem}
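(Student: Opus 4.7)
The plan is to deduce the property $\cP_e^+$ from Katok's horseshoe theorem via a two-step argument. Let $\mu$ be an ergodic measure of $f$ with $h_\mu(f) > 0$; the goal is to produce a sequence of periodic measures converging to $\mu$ in the weak-$*$ topology.

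The first step is to show that $\mu$ is hyperbolic. Since $\dim M = 2$, the measure $\mu$ has exactly two Lyapunov exponents $\lambda_1(\mu) \ge \lambda_2(\mu)$. Ruelle's inequality gives
$$h_\mu(f) \le \max(\lambda_1(\mu),0) + \max(\lambda_2(\mu),0),$$
and since $h_\mu(f) > 0$, this forces $\lambda_1(\mu) > 0$. Applying the same inequality to $f^{-1}$, whose Lyapunov exponents are $-\lambda_2(\mu) \ge -\lambda_1(\mu)$, together with $h_\mu(f^{-1}) = h_\mu(f) > 0$, forces $-\lambda_2(\mu) > 0$, i.e.\ $\lambda_2(\mu) < 0$. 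Thus $\mu$ is a hyperbolic ergodic measure.

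The second step is to apply Katok's horseshoe theorem. Since $f$ is $C^\infty$, in particular $C^{1+\alpha}$ for every $\alpha\in(0,1)$, Katok's theorem applies to $\mu$: for every weak-$*$ neighborhood $\cV$ of $\mu$ and every $\varepsilon > 0$, there is a uniformly hyperbolic basic set $\Lambda_{\cV,\varepsilon}$ whose topological entropy exceeds $h_\mu(f) - \varepsilon$ and which carries an invariant measure lying in $\cV$. Since periodic measures are weak-$*$ dense among the invariant measures on any uniformly hyperbolic basic set (by Bowen's specification and the shadowing lemma), one can choose a periodic orbit $\gamma \subset \Lambda_{\cV,\varepsilon}$ with $\delta_\gamma \in \cV$. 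Letting $\cV$ shrink to $\{\mu\}$ yields the desired sequence of periodic measures converging to $\mu$.

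The only step where the $C^\infty$ hypothesis (rather than merely $C^1$) is crucially used is the invocation of Katok's theorem, whose proof rests on Pesin theory and hence requires $C^{1+\alpha}$ regularity. The hyperbolicity step is specific to $\dim M = 2$: in higher dimensions, a positive-entropy ergodic measure may have some vanishing Lyapunov exponents, and the deduction above breaks down, which is precisely the obstacle absent from the surface setting.
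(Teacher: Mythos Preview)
Your proof is correct and follows essentially the same approach as the paper's: first use Ruelle's inequality (applied to both $f$ and $f^{-1}$) to conclude that any positive-entropy ergodic measure on a surface is hyperbolic, and then invoke Katok's work to obtain the approximation by periodic measures. The paper's version is simply terser, citing Katok \cite{K} (and \cite{G}) directly for the periodic approximation rather than spelling out the intermediate horseshoe step.
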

\begin{proof}
Assume that $f$ is a surface diffeomorphism and $\mu$ is an ergodic measure with positive entropy. As in Katok \cite{K}, by applying Ruelle's inequality \cite{R}, one knows that $\mu$ is hyperbolic. Then it was proved in \cite{K} that  $\mu$ is accumulated by periodic orbits. See also \cite{G}.
\end{proof}
Theorem~\ref{Thm:surface-diff} follows from Theorem D', Theorem~\ref{Thm:surface-expansive} 
and Theorem~\ref{Thm:surface-positive} directly.

\subsection{Diffeomorphisms away from homoclinic tangencies}

\begin{proof}[Proof of Theorem~\ref{thm.homoclinic-class}] It has been proved in \cite{LVY} that diffeomorphisms away from homoclinic tangencies have the property $\cS$. See also \cite{CSY} and \cite{DFPV} for a generic version. Then it follows that for $C^1$ generic diffeomorphisms, any ergodic measure can be accumulated by periodic measures by using Ma\~n\'e's ergodic closing lemma \cite{Ma}. This verifies property $\cP_e.$ Now, the first item of Theorem~\ref{thm.homoclinic-class} follows from Theorem D'.
	
Given  a non-trivial isolated transitive set $\Lambda$,  by Theorem 3.5 in \cite{ABC}, 	one has that 
\begin{itemize}
	\item $\overline{\cM_{erg}(\Lambda)}=\cM_{inv}(\La)$;
	\item  generic measure $\mu$ on $\Lambda$ has full support, in formula: ${\rm supp}(\mu)=\La$.
\end{itemize}
	Thus, the proof of   second item is complete by Theorem D'.
\end{proof}

\begin{Acknowledgements}
We would like to thank P. Varandas for his nice comments.
\end{Acknowledgements}

%
%
%
%
%
%
%
%


\vskip 5pt

\begin{tabular}{l l l}
\emph{\normalsize Dawei Yang}
& \quad \quad&
\emph{\normalsize Jinhua Zhang}
\medskip\\

\small School of Mathematical Sciences
&& \small  School of Mathematical Sciences\\
\small Soochow University
&& \small Beihang University\\
\small Suzhou, 215006, P.R. China
&& \small Beijing 100191, P.R. China.\\
\texttt{yangdw1981@gmail.com}
&&\texttt{zjh200889@gmail.com}\\
\texttt{yangdw@suda.edu.cn}
&&\texttt{jinhua$\_$zhang@buaa.edu.cn}
\end{tabular}

\end{document}